\newtheorem{theorem}{Theorem}
\newtheorem{proposition}[theorem]{Proposition}
\newtheorem{lemma}[theorem]{Lemma}
\newtheorem{corollary}[theorem]{Corollary}
\newtheorem{definition}[theorem]{Definition}
\newtheorem{example}[theorem]{Example}
\def\R{\mathbb R}
\def\tc{{\mathsf{TC}}}
\title[Autonomous motion in changing environment]{Autonomous motion in changing environment, fibrations and reaction mechanisms}
\author{Michael Farber}
\address{Michael Farber, Queen Mary University of London and AIMATH RESEARCH, London, UK}
\email{m.farber@qmul.ac.uk}
\author{Stefan Kurz}
\address{Stefan Kurz, Robert Bosch GmbH, Corporate Research}
\email{stefan.kurz2@de.bosch.com}
\author{Mathias Pillin}
\address{Mathias Pillin, Robert Bosch GmbH, Bosch Mobility}
\email{mathias.pillin@de.bosch.com}
\date{24 June 2026}                                           
\begin{document}
\begin{abstract} In this paper we develop further the formalism of fibrations of configuration spaces as a tool for modelling motion of
autonomous systems in variable environments. 
We analyse the situations when the external conditions may change during the motion of the system and analyse two possibilities: (a) when the behaviour of the external conditions is known in advance; and (b) when the future changes of the external conditions are unknown but we can measure the current state and the current velocity of the external conditions, at every moment of time. 
We prove that 
in the case (a) 
the complexity of the motion algorithm is the same as in the case of constant external conditions. 
In case (b) we introduce a new concept of {\it a reaction mechanism} which allows to take into account unexpected and unpredictable changes in the environment. A reaction mechanism is mathematically {\it an infinitesimal lifting function} on a fibre bundle, a nonlinear generalisation of the classical concept of an Ehresmann connection. 
We illustrate these notions by examples which show that nonlinear infinitesimal lifting function (reaction mechanisms) appear naturally, are inevitable and ubiquitous.  
\end{abstract}
\maketitle
\section{Modeling autonomous systems moving in varying environments} 
\subsection{} Consider an autonomous system  $M$, {\it \lq\lq the machine\rq\rq},  moving in a variable environment. 
As an example, one may think of a robot or an autonomous vehicle \cite{SSS} operating in an area populated with some other moving objects, robots and humans.
As another example, we may mention a flock of fully automated drones 
cruising in space with the enemy drones and airplanes forming the moving environment. 
Yet another example is an autonomous submarine 
navigating in waters populated with moving mines. 
In this paper we analyse the question of how to build a motion algorithm for our autonomous system $M$ so that it achieves the goal regardless of the behaviour and unpredictable changes of the environment? 
We shall answer this question by employing the language of differential and algebraic topology and developing the mathematical concepts  
of {\it infinitesimal motion algorithm} or {\it infinitesimal reaction mechanisms}. 

The notion of infinitesimal reaction mechanism is a nonlinear generalisation of the classical notion of an Ehresmann connection; it allows selection of velocity of the autonomous system $M$ as a function of the velocity of changes in the environment. 

%
\subsection{} The notion of a configuration space is well-known in modern robotics \cite{LP}. This notion describes the space of all admissible states of the system moving in a stationary environment. The configuration space can be viewed as a topological space equipped with some additional structures (such as a smooth manifold structure, a Riemannian metric etc). 

\subsection{}\label{sec:13} In the recent papers \cite{CFW21, CFW22} the concept of a configuration space was extended to the case of varying external conditions, and a new {\it parametrised approach} was developed, see also \cite{FW23}. In this theory, instead of a single configuration space, one studies the whole family of configuration spaces parametrised by the configuration space of external conditions. In this approach we have a fibration 
\begin{eqnarray}\label{p}
p: E\to B,
\end{eqnarray}
where the base $B$ is the configuration space of the environment (the external conditions) and the total space $E$ is the configuration space of the compound system consisting of the machine $M$ and the environment. More specifically, the points $b\in B$ represent various states of the environment and for each 
$b\in B$ the fibre $E_b= p^{-1}(b)\subset E$ is the configuration space of the machine $M$ constrained by the external conditions $b$. Thus, the total space $E$ 
is the disjoint union $\bigsqcup_{b\in B}E_b$ of the configuration spaces of our system $M$ which are {\it \lq\lq related\rq\rq}\ by means of the topology of the space $E$. We shall refer to (\ref{p}) as {\it a fibration of configuration spaces}. 

In many practically interesting situations the projection (\ref{p}) is {\it a locally trivial bundle}. This means that the  
space of external conditions $B$ admits an open cover $\{U_i\}_{i\in J}$ with the property that each 
preimage $p^{-1}(U_i)$ is homeomorphic to the product $X\times U_i$ (where $X$ is the fibre). More precisely, this means that there is a continuous map $F_i: p^{-1}(U_i)\to X$ such that the map 
$$p^{-1}(U_i)\to X\times U_i, \quad e\mapsto (F_i(e), p(e)),\quad \mbox{where}\quad e\in p^{-1}(U_i), \quad i\in J,$$
is a homeomorphism. We see that the spaces of configurations $E_{b_1}=p^{-1}(b_1)$ and $E_{b_2}=p^{-1}(b_2)$ living under close enough external conditions $b_1\sim b_2$ can be naturally identified.

As a specific important example, the authors studied in \cite{FW23}
the algorithms of motion of multiple objects in space avoiding collisions with multiple obstacles. 

\subsection{}\label{sec:14} Next we recall the notion of {\it a parametrised motion planning algorithm}, see \cite{CFW21}, \cite{CFW22}, \cite{FW23}. 
Such an algorithm takes as input pairs of configurations $(e, e')$ living under the same external conditions (i.e. such that $p(e)=p(e')\in B$) and produces as output a continuous motion of the system $\gamma: [0,1]\to E$
with the properties $\gamma(0)=e$, $\gamma(1) =e'$ and, moreover, $p(\gamma(t))=p(e)=p(e')\in B$ for any $t\in [0,1]$; the latter property means that the motion of the system is performed under the {\it constant} external conditions. 
\begin{figure}[h]
\begin{center}
\includegraphics[scale=0.3]{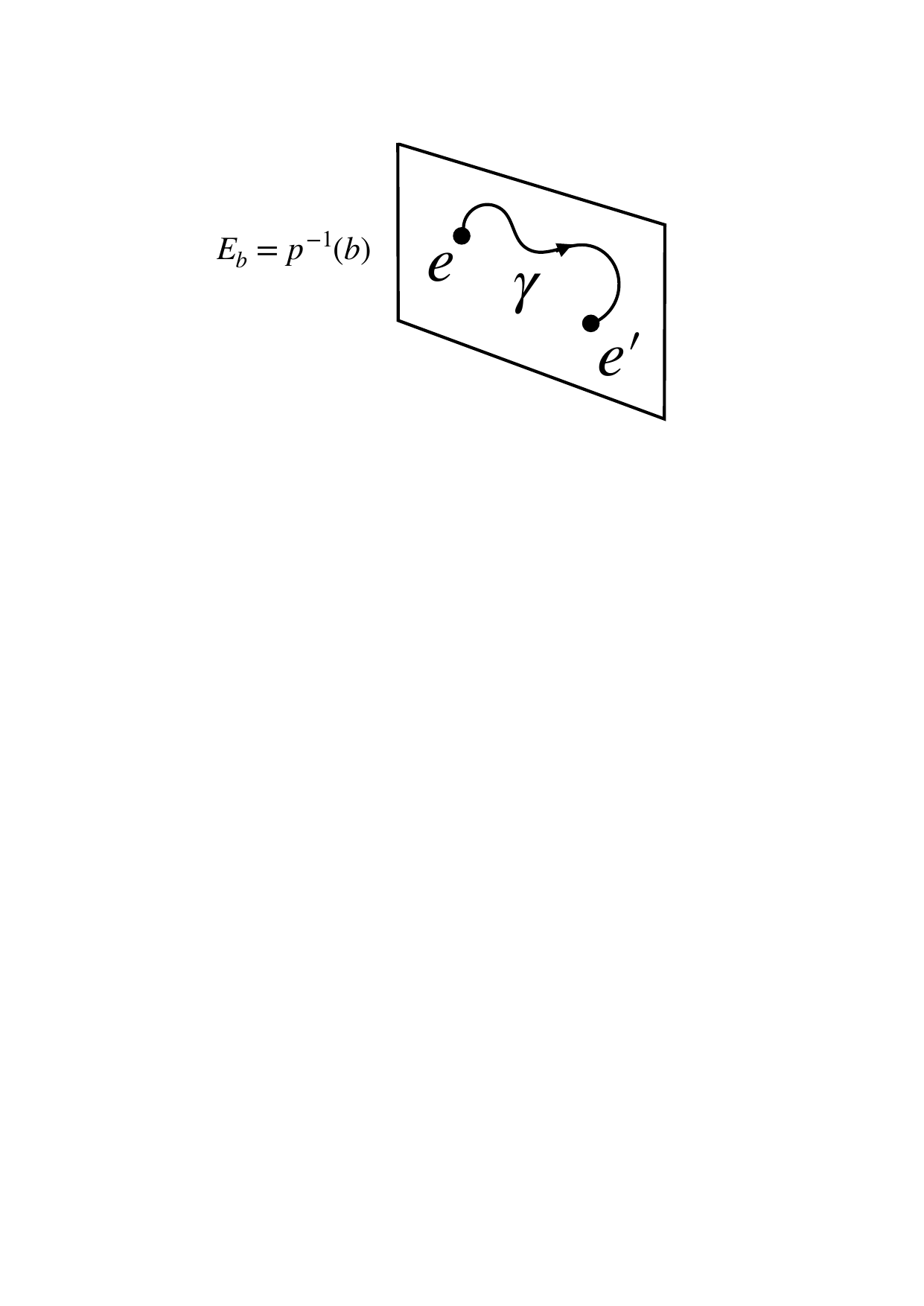}
\caption{Parametrized motion planning algorithm}\label{fig:fiber1}
\end{center}
\end{figure}
Hence, we see that while parametrised motion planning algorithms work for varieties of external conditions, they are based on the assumption that {\it the external conditions do not change during the implementation of the motion}. This is the main distinction with the novel approach which we develop in this paper. 

\subsection{}\label{sec:15} Next we recall the notion of parametrised topological complexity \cite{CFW21}, \cite{CFW22}, \cite{FW23}. 
Given a locally trivial bundle $p: E\to B$ with fibre $X$, we denote by $E^2_B \subset E\times E$ the subspace consisting of all pairs $(e, e')\in E\times E$ with $p(e)=p(e')\in B$. 
Besides, we denote by $E^I_B\subset E^I$ the subspace of the path-space consisting of all continuous paths 
$\gamma: I\to E$ such that the path $p\circ \gamma: I\to B$ is constant; here $I=[0,1]$ denotes the unit interval. The evaluation map
\begin{eqnarray}\label{pi}
\Pi: E^I_B \to E^2_B, \quad \mbox{where}\quad \Pi(\gamma) =(\gamma(0), \gamma(1)),
\end{eqnarray}
is a locally trivial fibration. Its fibre over a pair $(e, e')\in E^2_B$ is the space of all 
paths in the fibre starting at $e$ and ending at $e'$; this space is homotopy equivalent to $\Omega X$, the space of based loops in $X$. 

{\it A parametrised motion planning algorithm} (as described in \S \ref{sec:14}) is a section
\begin{eqnarray}\label{sec}
s: E^2_B\to E^I_B
\end{eqnarray} 
of the fibration 
(\ref{pi}). The following definition gives a natural measure of complexity of a parametrised motion planning algorithm:

\begin{definition}\label{def1} Let $p: E\to B$ be a locally trivial bundle with the base $B$ and the fibre $X$ being metrisable separable ANR's (Absolute Neighbourhood Retracts). 
The parametrised topological complexity $\tc[p:E\to B]$ is the smallest integer $k\ge 0$ such that there exists
a partition
\begin{eqnarray}\label{part}
E^2_B= F_0\sqcup F_1\sqcup \dots\sqcup F_k
\end{eqnarray}
with the property that over each set $F_i$ there exists a continuous section $s_i:F_i\to E^I_B$ of the fibration (\ref{pi}), where $i=0, 1, \dots, k$. The sections $s_0, \dots, s_k$ determine a globally defined section $s: E^2_B\to E^I_B$ (i.e. a parametrised motion planning algorithm) by the rule $s|_{F_i}=s_i$. 
\end{definition}

The assumptions that the base $B$ and the fibre $X$ are  metrisable separable ANR's is satisfied in all practically interesting situations. However, if these assumptions are not valid, then instead of partitions (\ref{part}) one speaks about open covers 
\begin{eqnarray}\label{cov}
E^2_B= U_0\cup U_1\cup \dots\cup U_k,
\end{eqnarray}
where the minimal value of $k$ equals $\tc[p:E\to B]$. We refer to Proposition 4.7 from \cite{CFW21} 
which gives equivalence of results based on open covers and partitions. This result from \cite{CFW21} is based on the work \cite{GC}. 

In some situations the parametrised topological complexity $\tc[p:E\to B]$ equals the usual topological complexity $\tc(X)$ of the fibre $X$. 
This happens in the case of the trivial bundle and also for the principal bundles $p: E\to B$, see \cite{CFW21}, \cite{CFW22}, \cite{FW23}. 
An example where $\tc[p:E\to B]>\tc(X)$ is the Fadell - Neuwirth bundle 
\begin{eqnarray}\label{fn}
p: F(\R^d, n+m)\to F(\R^d, m)
\end{eqnarray} 
studied in \cite{CFW21} and \cite{CFW22}. Here the symbol $F(\R^d, n)$ stands for the configuration space of $n$ pairwise distinct points in $\R^d$. The Fadell - Neuwirth bundle (\ref{fn})
is the fibration of configuration spaces (\ref{p}) describing collision free motion of $n$ point-size robots in the presence of $m$ point-size obstacles. 
 The parametrised topological complexity of the bundle 
(\ref{fn}) equals $2n+m-1$ for $d\ge 3$ odd and it equals $2n+m-2$ for $d\ge 2$ even. 
We refer to the original articles for details. 


\section{Lifting functions and reaction mechanisms}\label{sec:2}

In this section we introduce terminology and give literature references. 

\subsection{} Consider an autonomous system $M$ moving in a variable environment. We assume that the environment is changing and this forces our system $M$ to move such that (a) $M$ remains consistent with the motion of the environment and (b) $M$ implements its ultimate goal. 
As an example, one can think of a robot $M$ moving in an environment occupied by some other moving objects (robots, machines, humans); the task (a)  in this case means that $M$ avoids collisions with the others. 


{\it A reaction mechanism} is a device that solves the task (a) mentioned above. 

In many practical situations the task (a) has priority over (b), i.e. the motion algorithm must first ensure safety and then, if possible, move the controlled system to the desired state. In other words, we should study the variety of reaction mechanisms and, if possible, select the one implementing the task (b). 

The problem can be stated 
using the language of fibrations $p:E\to B$ of configuration spaces (\ref{p}) as follows. The present state of the system $M$ is described by the point $e\in E$ and the present state of the external conditions is described by the point $b=p(e)\in B$. The external conditions move; their motion is represented by a continuous curve $\gamma: [0, 1]\to B$ satisfying $\gamma(0)=b$. A reaction mechanism $L$ associates with 
the curve $\gamma$ and the point $e\in E$ {\it a lifted curve} $\tilde \gamma: [0,1]\to E$ satisfying $\tilde \gamma(0)=e$ and $p(\tilde\gamma(t))=\gamma(t)$ for all $t\in [0,1]$. In other words, $L$ is the map $\gamma\mapsto \tilde \gamma$. 

We shall denote by $I=[0,1]$ the unit interval and by $B^I$ and $E^I$ the spaces of continuous maps $I\to B$ and $I\to E$ correspondingly. We equip the path-spaces $B^I$ and $E^I$ with the compact-open topology, \cite{Sp}. We shall consider also the space $E\times_B B^I\subset E\times B^I$, which is defined by
\begin{eqnarray}\label{bbe}
E\times_B B^I\, =\, \{(e, \gamma)\in E\times B^I; p(e)=\gamma(0)\}.
\end{eqnarray} 
The projection 
\begin{eqnarray}
\pi: E^I\to E\times_B B^I
\end{eqnarray}
associates with any path $\tilde \gamma\in E^I$ the pair $(e, \gamma)\in E\times_B B^I$ where $e=\tilde \gamma(0)$ and $\gamma=p\circ \tilde\gamma$. 

\begin{definition} \label{liftingf}
A lifting function for the map $p:E\to B$ is a continuous map 
\begin{eqnarray}\label{eq:9a}
L: E\times_B B^I\to E^I, \quad L(e, \gamma)=\tilde\gamma,
\end{eqnarray} 
satisfying $\pi\circ L={\rm id}$, i.e. $e=\tilde \gamma(0)$ and $\gamma=p\circ \tilde\gamma$. 
\end{definition}

This definition appears in \cite{Sp}, chapter 2. 

\begin{corollary}
A lifting function $L$ for the fibration of configuration spaces $p: E\to B$ provides a reaction mechanism for the controlled system $M$. 
The motion of the compound system produced by the reaction mechanism 
$$\tilde \gamma(t) =L(e, \gamma)(t), \quad t\in [0,1],$$ 
is consistent with the variable external conditions changing as described by the path $\gamma(t)$. 
\end{corollary}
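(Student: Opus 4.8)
The plan is to show that the corollary is nothing more than an unwinding of Definition~\ref{liftingf} together with the informal description of a reaction mechanism given at the beginning of Section~\ref{sec:2}. First I would identify the domain of $L$: by~(\ref{bbe}), a point of $E\times_B B^I$ is exactly a pair $(e,\gamma)$ consisting of a present state $e\in E$ of the compound system together with a curve $\gamma\in B^I$ of external conditions that is compatible with this state, in the sense that $\gamma(0)=p(e)$. These are precisely the admissible inputs for a reaction mechanism: the machine $M$ sits at $e$, the environment is currently in the state $b=p(e)$, and then it evolves along $\gamma$.

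Next I would unpack the equation $\pi\circ L=\mathrm{id}$. Writing $\tilde\gamma=L(e,\gamma)$, the definition of $\pi$ gives $\pi(\tilde\gamma)=(\tilde\gamma(0),\,p\circ\tilde\gamma)$, so $\pi\circ L=\mathrm{id}$ is equivalent to the two identities $\tilde\gamma(0)=e$ and $p\circ\tilde\gamma=\gamma$. The first says that the lifted motion of the compound system starts at the present state $e$; the second says that at every moment $t\in[0,1]$ the system occupies a configuration lying over the current state $\gamma(t)$ of the environment, that is, $p(\tilde\gamma(t))=\gamma(t)$. This is exactly the consistency requirement (a): the produced motion $\tilde\gamma$ of $M$ is compatible with the prescribed evolution of the external conditions. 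Fixing $e$ and varying $\gamma$ then gives the assignment $\gamma\mapsto\tilde\gamma$ that was called a reaction mechanism.

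Finally I would record that continuity of $L$ with respect to the compact-open topologies on $B^I$ and $E^I$ is what promotes $L$ from a mere set-theoretic assignment to a genuine \emph{mechanism}: a small perturbation of the observed environmental curve $\gamma$, or of the initial state $e$, changes the reaction $\tilde\gamma$ only slightly. I would also remark that task (b)—steering $M$ toward its ultimate goal—is a separate layer, handled by composing with a parametrised motion planning algorithm as in \S\ref{sec:14}, and is not part of what the corollary asserts. There is essentially no obstacle here: the statement is a reformulation of the definitions, and the only point requiring a line of care is verifying that the ambient space $E\times_B B^I$ of~(\ref{bbe}) coincides with the set of admissible inputs of a reaction mechanism, which is immediate from the defining condition $p(e)=\gamma(0)$.
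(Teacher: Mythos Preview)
Your proposal is correct and matches the paper's treatment: the corollary is stated without proof because it is an immediate reinterpretation of Definition~\ref{liftingf}, and your argument does exactly that unwinding. There is nothing more to add.
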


\subsection{} At an intermediate moment of time $t\in (0,1]$ the automated system {\it \lq\lq knows\rq\rq\ } only a part of the trajectory $\gamma$ of the external conditions, 
namely only $\gamma|_{[0,t]}$ can be measured by the sensing and observation block of the autonomous system by time $t$. We see that the reaction mechanism $L$ must possess 
some additional properties to be able to process the information about the external conditions and at the same time continuously produce the motion of the controlled system. The motion mechanism of this type are {\it \lq\lq incremental\rq\rq}; they will be considered and formalised in \S \ref{sec:ilf}. 

\subsection{} The existence of a Reaction Mechanism (i.e. a lifting function) imposes certain restrictions on the map $p: E\to B$. 

\begin{theorem}[See \cite{Sp}, Chapter 2, \S7, Theorem 8] A map $p: E\to B$ admits a lifting function if and only if is a fibration in the sense of Hurewicz. 
\end{theorem}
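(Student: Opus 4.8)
The plan is to prove both implications by means of the exponential-law adjunction relating maps out of a product $Y\times I$ and maps into a path-space $Z^I$. The one piece of background I will rely on is that, since $I=[0,1]$ is compact Hausdorff, the evaluation map $Z^I\times I\to Z$ is continuous for every space $Z$; consequently, for every space $Y$, a map $Y\times I\to Z$ is continuous if and only if its adjoint $Y\to Z^I$ is continuous. I will also use the definition of a Hurewicz fibration as a map $p:E\to B$ having the homotopy lifting property (HLP) with respect to \emph{every} topological space $Y$.

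For the implication ``fibration $\Rightarrow$ lifting function'', the idea is to feed the HLP a single, tautological test problem and to read $L$ off from its solution. Take $Y:=E\times_B B^I$, let $f:Y\to E$ be the projection $(e,\gamma)\mapsto e$, and let $H:Y\times I\to B$ be the evaluation $((e,\gamma),t)\mapsto\gamma(t)$; the defining condition (\ref{bbe}) of $E\times_B B^I$ is precisely what makes $H(\cdot,0)=p\circ f$, so this is a legitimate instance of the HLP. The resulting lift $\tilde H:Y\times I\to E$ has a continuous adjoint $L:E\times_B B^I\to E^I$, $L(e,\gamma)(t):=\tilde H((e,\gamma),t)$, and the two defining conditions of $\tilde H$ (restricting to $f$ at $t=0$, projecting to $H$) translate verbatim into $\pi\circ L=\mathrm{id}$. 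Hence $L$ is a lifting function.

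For the converse, given a lifting function $L$ and an arbitrary HLP test problem consisting of $f:Y\to E$ and $H:Y\times I\to B$ with $H(\cdot,0)=p\circ f$, I would replace $H$ by its adjoint $\widehat H:Y\to B^I$, note that $y\mapsto(f(y),\widehat H(y))$ is a continuous map landing in $E\times_B B^I$ (again by the compatibility condition), compose it with $L$, and pass back to the adjoint to obtain $\tilde H:Y\times I\to E$. The identity $\pi\circ L=\mathrm{id}$ then yields at once $\tilde H(\cdot,0)=f$ and $p\circ\tilde H=H$, so $p$ enjoys the HLP for all $Y$ and is therefore a Hurewicz fibration.

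I expect the only point genuinely requiring care to be the topological bookkeeping with the compact-open topology: at each stage one must check that the passage between maps $Y\times I\to Z$ and maps $Y\to Z^I$ is by an honestly \emph{continuous} map in the direction being used. This is exactly where compactness of $I$ enters, through the continuity of evaluation. Everything else amounts to unwinding the definitions of a lifting function and of the HLP and matching up the two bookkeeping conditions $\pi\circ L=\mathrm{id}$ and $p\circ\tilde H=H$.
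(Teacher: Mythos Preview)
Your argument is correct and is precisely the standard proof (the one in Spanier, Chapter 2, \S7, Theorem 8) that the paper cites; note that the paper itself gives no proof but only the reference, so there is nothing further to compare.
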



We also note the following result:

\begin{theorem}[See \cite{Sp}, chapter 2, \S 7, Theorem 13] If $B$ is Hausdorff and paracompact then 
any locally trivial map $p:E\to B$ is a fibration in the sense of Hurewicz and hence it admits a lifting function. 
\end{theorem}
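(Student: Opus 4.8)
\emph{Proof plan.} The strategy is to combine two facts: (i) every \emph{trivial} bundle is a Hurewicz fibration, and (ii) being a Hurewicz fibration propagates from a numerable open cover of the base to the whole base. For (i), the projection $\mathrm{pr}\colon X\times U\to U$ carries the explicit lifting function $\bigl((x,u),\gamma\bigr)\mapsto\bigl(t\mapsto(x,\gamma(t))\bigr)$, so by the criterion quoted above ($p$ is a Hurewicz fibration iff it admits a lifting function, see \cite{Sp}) it is a fibration. Since $p\colon E\to B$ is locally trivial, there is an open cover $\{U_i\}_{i\in J}$ of $B$ such that each restriction $p_i\colon p^{-1}(U_i)\to U_i$ is a trivial bundle, hence admits a lifting function $L_i\colon p^{-1}(U_i)\times_{U_i}U_i^I\to p^{-1}(U_i)^I$. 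Because $B$ is Hausdorff and paracompact, the cover $\{U_i\}$ is numerable: it carries a subordinate, locally finite partition of unity $\{\phi_i\}_{i\in J}$ with $\phi_i^{-1}\bigl((0,1]\bigr)\subset U_i$. Granting (ii) — the Hurewicz uniformization principle — we conclude that $p$ admits a lifting function, hence is a Hurewicz fibration.

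It remains to indicate how one proves (ii), which is the real content. Given $(e,\gamma)\in E\times_B B^I$, the idea is to lift $\gamma$ over $I=[0,1]$ piecewise, applying $L_i$ on subintervals of $I$ whose $\gamma$-image lies in a single $U_i$, then concatenating the lifts, and doing all of this continuously in $(e,\gamma)$. The naive approach — using compactness of $I$ to choose a subdivision $0=t_0<\cdots<t_n=1$ with $\gamma([t_{k-1},t_k])\subset U_{i_k}$ and lifting successively — fails because such a subdivision cannot be chosen continuously in $\gamma$. The remedy is to convert the partition of unity into a \emph{continuous clock}: set $\psi_i=\phi_i\circ\gamma\in C(I,[0,1])$ and use partial sums of, and an ordering of, the $\psi_i$ to manufacture breakpoints $0=s_0\le s_1\le\cdots$ together with an index assignment $k\mapsto i_k$, so that $\gamma$ maps $[s_{k-1},s_k]$ into $U_{i_k}$ whenever $s_{k-1}<s_k$. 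Since the $s_k$ are built from the $\psi_i$, they depend continuously on $\gamma$, and degenerate intervals with $s_{k-1}=s_k$ contribute nothing. One then defines $\tilde\gamma$ by induction on $k$: on $[0,s_1]$ it is the $L_{i_1}$-lift of $\gamma|_{[0,s_1]}$ starting at $e$, and on $[s_{k-1},s_k]$ it is the $L_{i_k}$-lift of $\gamma|_{[s_{k-1},s_k]}$ starting at the already defined point $\tilde\gamma(s_{k-1})$. The pieces match at the breakpoints, so $\tilde\gamma\in E^I$ is well defined, $\tilde\gamma(0)=e$, and $p\circ\tilde\gamma=\gamma$; setting $L(e,\gamma):=\tilde\gamma$ gives $\pi\circ L=\mathrm{id}$.

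Two reductions keep this honest. First, one may assume $J$ countable: from the numerable cover $\{U_i\}$ one extracts a countable numerable refinement by grouping the sets of the partition of unity according to how many indices are simultaneously positive there (Milnor's trick), so that only countably many — and locally only finitely many — pieces are ever concatenated and the clock is well defined. Second, one must check that $L$ is continuous into the compact-open topology; by the exponential law this reduces to joint continuity of $(e,\gamma,t)\mapsto\tilde\gamma(t)$, which follows from continuity of each $L_i$, the continuous dependence of the $s_k$ on $\gamma$, and local finiteness of the concatenation.

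The main obstacle is precisely this local-to-global patching: producing a piecewise lift that varies continuously with the path $\gamma$. A $\gamma$-dependent subdivision of $I$ is not continuous, and the only escape is to encode the subdivision in the partition of unity — which is available exactly because $B$ is paracompact Hausdorff. Once that device is in place, part (i) supplies the local lifting functions and the theorem follows; alternatively one may bypass the countable reduction by inducting on the two-set gluing $B=U_1\cup U_2$ and passing to transfinite unions, again powered by the partition of unity.
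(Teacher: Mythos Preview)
The paper does not prove this theorem at all: it is stated with a reference to Spanier (Chapter~2, \S7, Theorem~13) and no proof is given. So there is nothing in the paper to compare your argument against.

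That said, your outline is the standard route and is essentially what Spanier does: reduce to the Hurewicz uniformization principle (a map is a Hurewicz fibration if it is one over each member of a numerable open cover), observe that paracompact Hausdorff guarantees numerability, and note that trivial bundles are fibrations via the obvious lifting function. Your sketch of the uniformization step---manufacturing a continuous subdivision of $I$ out of the pulled-back partition of unity and concatenating local lifts---is the correct idea, though the details (well-ordering the index set, controlling the finitely many nonzero $\psi_i$ near each $t$, and verifying joint continuity) are where the actual work lies and are only gestured at here. As a proof \emph{plan} it is sound; as a proof it would need those details filled in, which is precisely why the paper defers to the literature.
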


\section{The case when variation of external conditions is known in advance}\label{sec:3}

\subsection{} In this section we shall analyse the situation when an autonomous system $M$ moves in variable external conditions and the external conditions change during the motion of our system. 
We shall assume that
 {\it the variation of the external conditions is known in advance. } We show that in this case the complexity of the motion algorithms equals the complexity of the motion algorithms under {\it constant} external condition, i.e. it equals the parametrised topological complexity $\tc[p: E\to B]$. 
 The main results of this section are Theorem \ref{thm:8} and Theorem \ref{comp}; they generalise  an earlier result \cite{FGY}.
 
\subsection{} In practical situations the assumption about the variation of external conditions being known in advance can be justified if the system employs a closed loop algorithm involving continuous observations under the changes of the external conditions as well as {\it \lq\lq the block of predictions\rq\rq\ }
 suggesting the most likely trajectory of the motion of external conditions. The error of the obtained prediction will be small if the time interval is short; in that case the predicted motion of the external conditions will have small deviation from their real motion.

\subsection{} \label{subsec:32} We shall use terminology introduced earlier in \S \ref{sec:14}, \S \ref{sec:15}, \S \ref{sec:2}. We shall assume in this section that the map 
$p:E\to B$ is a locally trivial bundle with fibre $F$. We shall also assume that the base $B$ is Hausdorff and paracompact; these assumptions are satisfied in typical applications we have in mind. 

Recall that parametrized topological complexity $\tc[p:E\to B]$ is defined as the sectional category of the fibration 
(\ref{pi}). 
A generalisation of this notion to the situation with varying external conditions can be described as follows. 
Consider the space 
$$B^I\times_{B^2}E^2\subset B^I\times E^2= B^I\times E\times E$$ consisting of all triples $(\gamma, e, e')$, 
where $\gamma\in B^I$ and $e, e'\in E$,
satisfying 
$$\gamma(0)=p(e)\quad\mbox{and}\quad  \gamma(1)=p(e').$$ 
Consider also the map
\begin{eqnarray}\label{tildea2d}
\tilde \Pi: E^I \to B^I\times_{B^2} E^2,
\end{eqnarray}
where for $\tilde \gamma\in E^I$ one has 
$$
\tilde \Pi(\tilde \gamma) = (\gamma, e, e')
\quad \mbox{with}\quad \gamma =p\circ \tilde \gamma\in B^I, \quad e=\tilde\gamma(0)\quad\mbox{and} \quad e'=\tilde\gamma(1).
$$
\begin{lemma}\label{lm:6}
The map (\ref{tildea2d}) is a Hurewicz fibration, and its fibre is homotopy equivalent to the based loop space $\Omega F$, where $F$ is the fibre of the original bundle $p:E\to B$. 
\end{lemma}
A proof of Lemma \ref{lm:6} is given in the Appendix. 

A section of the bundle $\tilde \Pi$  (not necessarily continuous),
$$s: B^I\times_{B^2} E^2 \to E^I$$  can be viewed as {\it a motion algorithm} for our system working under variable external conditions. 
This algorithm takes as {\it input} triples $$(\gamma, e, e')\in B^I\times_{B^2} E^2$$ and produces as {\it  output} a motion of the compound system 
$$t\mapsto \tilde \gamma(t)= s(\gamma, e, e')(t)\in E, \quad t\in I,$$ which (1) is consistent with the motion of external conditions $\gamma$, (2) it starts at the prescribed state $e=\tilde\gamma(0)$ and (3) it ends at 
the prescribed state $e'=\tilde\gamma(1)$.

\begin{definition}
Let $\widetilde\tc[p:E\to B]$ denote the sectional category of the bundle (\ref{tildea2d}). In more detail, $\widetilde\tc[p:E\to B]$ is the minimal integer $r\ge 0$ such that the space 
$B^I\times_{B^2}E^2$ admits an open cover of cardinality $r+1$, 
$$
B^I\times_{B^2}E^2= W_0\cup W_1\cup \dots\cup W_r,
$$
such that each open set $W_j$ has a continuous section $s_j: W_j\to E^I$ of $\tilde \Pi$, 
where $j=0, 1, \dots, r$.
\end{definition}

In this approach information about the changes in external conditions $\gamma$ is part of the input of the motion algorithm; in other words, we assume that the automated system knows 
$\gamma$ before it starts its motion. 
The integer $\widetilde\tc[p:E\to B]$ can be viewed as a measure of complexity of the motion planning problem under variable external conditions.

\begin{theorem}\label{thm:8}
For any locally trivial bundle $p:E\to B$ with Hausdorff and paracompact base $B$ one has the equality
\begin{eqnarray}\label{in:11}
\widetilde\tc[p:E\to B]= \tc[p:E\to B].
\end{eqnarray}  
\end{theorem}
\begin{proof} The bundles $\Pi$ and $\tilde\Pi$ appear in the commutative diagram
$$
\xymatrix{
E^I_B\ar[r]^{\subset}\ar[d]_{\Pi}&E^I\ar[d]^{\tilde\Pi}\\
E^2_B\ar[r]^{i\hskip 0.6cm}_{\subset\hskip 0.6cm}&B^I\times_{B^2}E^2,
}
$$
where the map $i$ is given by $i(e, e')=(\gamma, e, e')$ for $(e, e')\in E^2_B$ with $\gamma\in B^I$ being the constant path at the point $p(e)=p(e')\in B$. 
Both horizontal maps in this diagram are injective and $\Pi$ is the restriction of $\tilde \Pi$ onto a subset of the base, i.e. 
$$E^I_B=\tilde\Pi^{-1}(i(E^2_B)).$$
This gives an inequality
$\widetilde\tc[p:E\to B]\ge \tc[p:E\to B].$ 

To show that it is in fact an equality we note that the subset $$i(E^2_B)\subset B^I\times_{B^2} E^2$$ is a strong deformation retract. Fix a lifting function 
$L: E\times_B B^I \to E^I$ for the bundle $p:E\to B$. Define the deformation 
\begin{eqnarray}
h_\tau: B^I\times_{B^2}E^2\to B^I\times_{B^2}E^2, \quad \tau\in [0,1],
\end{eqnarray}
as follows. For $\gamma\in B^I$ and $e, e'\in E$ with $p(e)=\gamma(0)$ and $p(e')=\gamma(1)$ we define 
\begin{eqnarray}
h_\tau(\gamma, e, e') = (\gamma_\tau, e, e'_\tau),\end{eqnarray}
where
\begin{eqnarray}
\gamma_\tau(t) = \gamma((1-\tau)t)\quad \mbox{and}\quad e'_\tau=L(e', \overline\gamma)(\tau)
\end{eqnarray}
with $\overline \gamma(t)=\gamma(1-t)$. Here $\overline \gamma$ is the inverse path $\gamma$ and $\tilde \gamma= L(e', \overline\gamma)$ is its lift with the initial condition
$e'\in E$. This construction is illustrated on Figure \ref{fig:reverse}. Note that 
$$p(e)=\gamma_\tau(0)=\gamma(0)\quad\mbox{ and}\quad  p(e'_\tau)= \overline\gamma(\tau)=\gamma(1-\tau)=\gamma_\tau(1).$$ 

We see that the map $h_0$ is the identity map $B^I\times_{B^2}E^2\to B^I\times_{B^2}E^2$ and the map 
$h_1$ takes its values in the subspace $i(E^2_B)\subset B^I\times_{B^2}E^2$ since for $\tau=1$ the path $\gamma_\tau$ is constant. 
\begin{figure}[h]
\begin{center}
\includegraphics[scale=0.4]{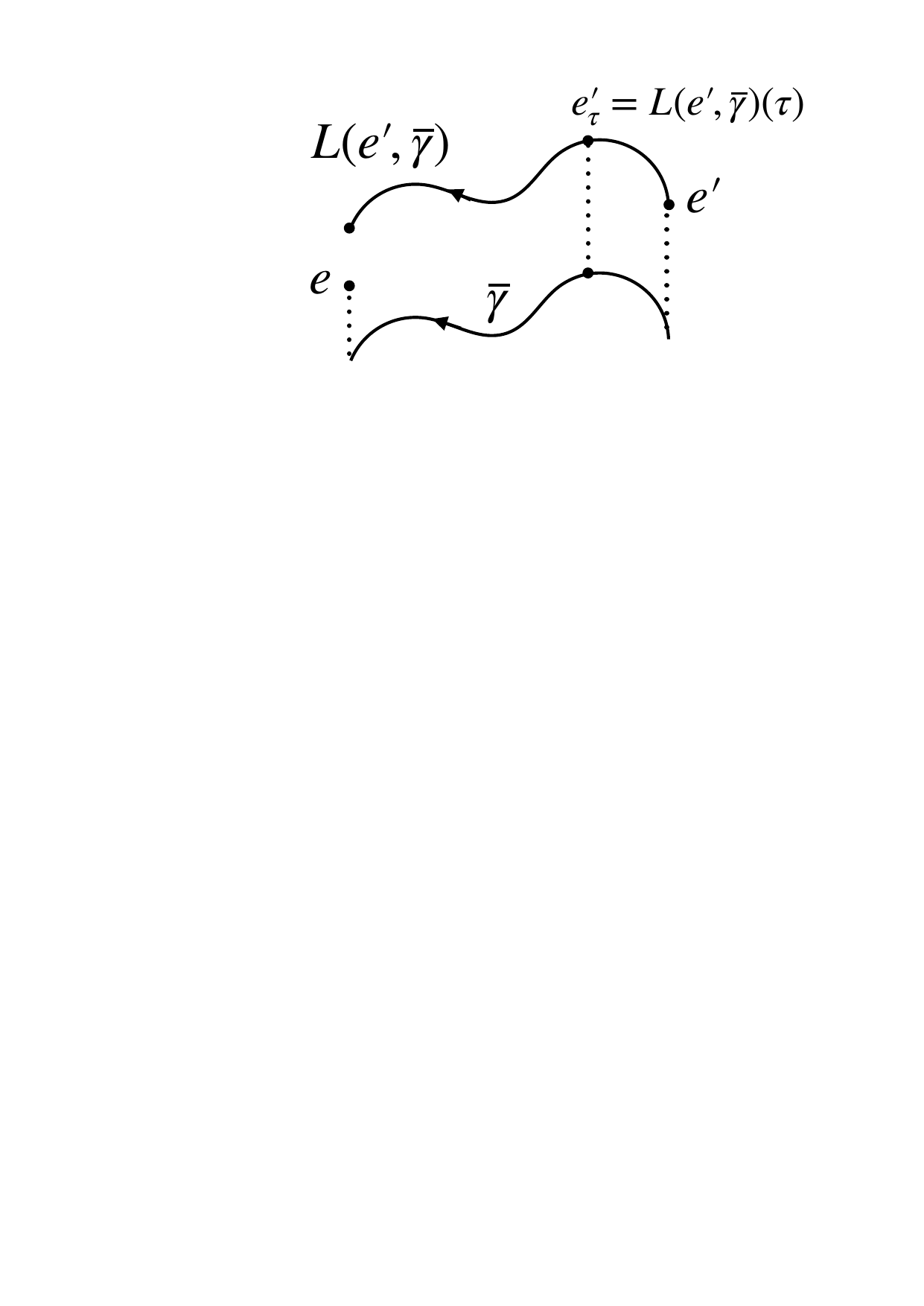}
\caption{The inverse path $\overline\gamma$ and its lift $L(e', \overline\gamma)$}
\label{fig:reverse}
\end{center}
\end{figure}

Besides, if the original triple $(\gamma, e, e')\in B^I\times_{B^2}E^2$ lies in $i(E^2_B)$, then $h_\tau(\gamma, e, e')=(\gamma, e, e')$ for all $\tau$. 
Indeed, in this case $\gamma$ is a constant path and hence  $\gamma_\tau=\gamma$ and $e'_\tau=e'$ for all $\tau\in [0,1]$. 

Now equality (\ref{in:11}) follows from well-known general properties of sectional category. 
\end{proof}
\subsection{}  Next we describe a specific motion algorithm build out of a parametrized motion algorithm and a lifting function. This algorithm may work under variable external conditions assuming that their movement is known in advance. 

For the fibration ({\ref{p}) consider a parametrised motion planning algorithm 
\begin{eqnarray}\label{eq:s}
s: E^2_B\to E^I_B
\end{eqnarray}
and a lifting function
\begin{eqnarray*}
L: E\times_B B^I \to E^I. 
\end{eqnarray*}
For a curve $\gamma\in B^I$ we shall denote by $$P_\gamma: E_{\gamma(0)}\to E_{\gamma(1)}$$ the operator of parallel transport with respect to the lifting function $L$, i.e. 
$P_\gamma(e)=L(e, \gamma)(1)$ for $e\in E$ with $p(e)=\gamma(0)$. {\it We shall assume below that $P_\gamma$ is a homeomorphism for any path $\gamma$; this assumption is automatically satisfied if the lifting function arises from an Ehresmann connection. }

Define the map
\begin{eqnarray}\label{eq:16d}
\tilde s: B^I\times_{B^2}E^2\to E^I
\end{eqnarray}
via 
\begin{eqnarray}\label{eq:17}
\tilde s(\gamma, e, e')(t) = L(s(e, e'_\gamma)(t), \gamma)(t), \quad t\in I. 
\end{eqnarray}
\begin{figure}[h]
\begin{center}
\includegraphics[scale=0.3]{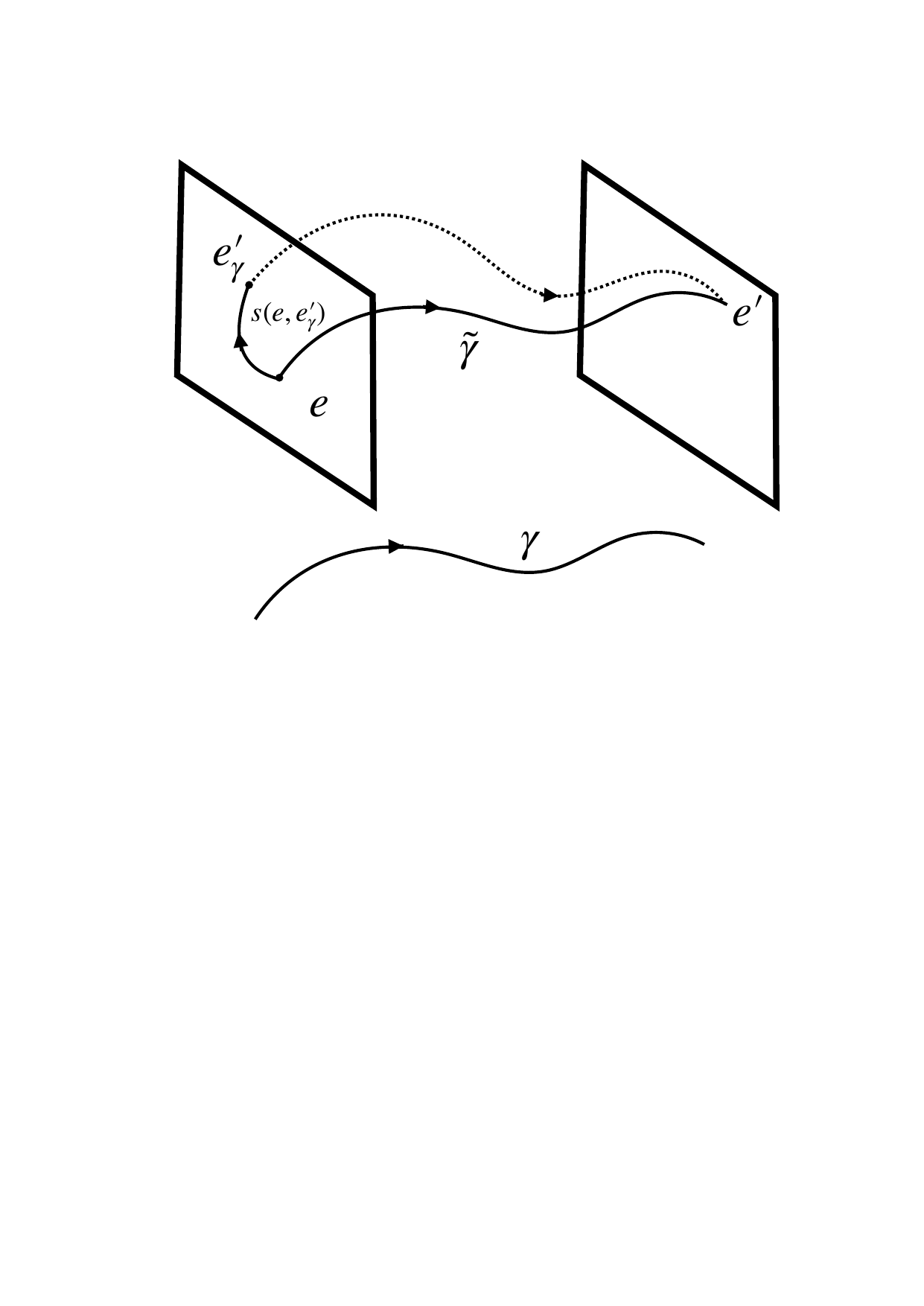}
\caption{The dotted line represents the path $L(e'_\gamma, \gamma)$ and $\tilde\gamma =\tilde s(\gamma, e, e')$.}
\label{fig:lift1}
\end{center}
\end{figure}
Here $\gamma\in B^I$, \, $e, e'\in E$ with $p(e)=\gamma(0)$, $p(e')=\gamma(1)$. The point $e'_\gamma\in E$ is defined by 
$
e'_\gamma = P_\gamma^{-1}(e').
$
This is illustrated by Figure \ref{fig:lift1}. 
We see that the points $e$ and $e'_\gamma$ lie in the same fibre of $p:E\to B$ and the section $s(e, e'_\gamma)\in E^I_B$ is well-defined. 
We have 
$$
p(\tilde s(\gamma, e, e')(t)) = p(L(s(e, e'_\gamma)(t), \gamma)(t))=\gamma(t), 
$$
and
$$\quad \tilde s(\gamma, e, e')(0) =e, \quad \tilde s(\gamma, e, e')(1) = P_\gamma(e'_\gamma)=e'.$$
We conclude that $\tilde s$ is a section of the bundle $\tilde \Pi$, see (\ref{tildea2d}); in other words, $\tilde s$ represents the desired motion algorithm.

\subsection{Complexity of the extended motion algorithm} \label{27} 
\begin{theorem}\label{comp}
The extended algorithm $\tilde s$, see (\ref{eq:16d}), has complexity equal to the complexity of the initial algorithm $s$, see (\ref{eq:s}). 
Therefore, under the assumption that the motion of external conditions is known in advance, the motion planning problem in the situation with changing external conditions has complexity equal the complexity of the initial problem with stationary external conditions.
\end{theorem}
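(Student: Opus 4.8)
The plan is to read the theorem as the equality $\tc[\tilde\Pi]=\tc[p:E\to B]$ of the complexity of the fibration $\tilde\Pi$ of (\ref{tildea2}) with the parametrised topological complexity of $p$ (i.e.\ the complexity of the fibration $\Pi$ of (\ref{pi}) in the sense of Definition \ref{def1}), and to prove it as two inequalities, each by an explicit passage between local sections of the two fibrations over matching partitions; along the way the construction (\ref{tildes}) is seen to produce, from any $s$ as in (\ref{eq:s}), an extended algorithm $\tilde s$ as in (\ref{tildea1}) whose complexity does not exceed that of $s$.

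\emph{The inequality $\tc[\tilde\Pi]\le\tc[p:E\to B]$.} Take $s$ realising $\tc[p:E\to B]=k$, carried by a partition $E^2_B=F_0\sqcup\dots\sqcup F_k$ with continuous local sections $s_i:F_i\to E^I_B$, and let $\rho:B^I\times_B E^2_B\to E^2_B$ be the projection $(\gamma,e,e')\mapsto(e,e')$. Formula (\ref{tildes}) with $s_i$ in place of $s$ defines maps $\tilde s_i:\rho^{-1}(F_i)\to E^I$; that each $\tilde s_i$ is a section of $\tilde\Pi$ is precisely the computation performed after (\ref{tildes}) in \S \ref{subsec:32}, and continuity of $\tilde s_i$ follows, via the exponential law for the locally compact factor $I$, from continuity of $\rho$, of $s_i$, of the lifting function $L$, and of path evaluation and substitution. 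Since $\{\rho^{-1}(F_i)\}_{i=0}^k$ partitions $B^I\times_B E^2_B$, the glued $\tilde s$ has complexity $\le k$; applied to an arbitrary (not necessarily optimal) $s$ this also gives the first assertion of the theorem, namely that the extended algorithm (\ref{tildes}) has complexity no larger than that of $s$.

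\emph{The inequality $\tc[p:E\to B]\le\tc[\tilde\Pi]$.} Use the embedding $j:E^2_B\hookrightarrow B^I\times_B E^2_B$, $j(e,e')=(c_{p(e)},e,e')$, where $c_b\in B^I$ denotes the constant path at $b$; this $j$ is continuous since $b\mapsto c_b$ is continuous for the compact-open topology. Given a section $\tilde s$ of $\tilde\Pi$ realising $\tc[\tilde\Pi]=m$, carried by a partition $B^I\times_B E^2_B=\tilde F_0\sqcup\dots\sqcup\tilde F_m$ with continuous pieces $\tilde s_i$, restrict along $j$: by the defining property (\ref{5}) the path $t\mapsto\tilde s(c_b,e,e')(t)$, with $b=p(e)$, stays in the single fibre $E_b$ and runs from $e$ to $L(e',c_b)(1)$, while the lift $t\mapsto L(e',c_b)(t)$ also stays in $E_b$ and runs from $e'$ to $L(e',c_b)(1)$. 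Concatenating the first path with the reverse of the second yields a path in $E_b$ from $e$ to $e'$, i.e.\ a point of $E^I_B$ over $(e,e')$; carrying this out with $\tilde s_i$ gives continuous local sections $s_i:j^{-1}(\tilde F_i)\to E^I_B$ of $\Pi$ over the partition $\{j^{-1}(\tilde F_i)\}_{i=0}^m$, so $\tc[p:E\to B]\le m$. Together with the previous step this proves the equality, hence the stated conclusion that the motion planning problem with foreknown variable external conditions has the same complexity as the stationary one.

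The point requiring actual thought is the correction in the second step: simply restricting $\tilde s$ to constant environment paths does \emph{not} return a parametrised motion planning algorithm, because $\tilde\Pi$ records the displaced target $L(e',c_b)(1)$ rather than $e'$ itself; splicing in the reverse of the lift $L(e',c_b)$, which by Definition \ref{liftingf} never leaves the fibre $E_b$, is what repairs the endpoint. Everything else is routine and I would only sketch it: continuity of concatenation, reversal and substitution into $L$ in the compact-open topology; the fact that $\rho$ and $j$ pull the given partitions (or, when the ANR hypotheses fail, open covers) of one space back to partitions (open covers) of the other, with preimages of open sets open; and the transfer of the ANR hypotheses of Definition \ref{def1} to the path spaces $B^I\times_B E^2_B$ and $E^I$, or else passage to the open-cover formulation via Proposition 4.7 of \cite{CFW21}.
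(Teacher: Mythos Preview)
Your argument is correct and follows the same two--inequality scheme as the paper: pull back along the projection $Q=\rho:B^I\times_B E^2_B\to E^2_B$ for $\tc[\tilde\Pi]\le\tc[p:E\to B]$, and restrict along the constant--path embedding for the reverse inequality.

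The one place where your proof is sharper than the paper's is the reverse inequality. The paper simply asserts that $\tilde\Pi$ ``contains $\Pi$ as a restriction'' over the constant paths $B\subset B^I$; this is literally true only if the chosen lifting function satisfies $L(e',c_b)(1)=e'$, an assumption not stated in Definition~\ref{liftingf}. You noticed that for a general $L$ the restricted section lands at $L(e',c_b)(1)$ rather than at $e'$, and you repaired this by concatenating with the reverse of the fibrewise path $L(e',c_b)$. That extra splice is exactly what is needed to make the argument go through without any hypothesis on $L$, so your version closes a small gap the paper leaves implicit; conversely, the paper's shortcut is fine once one normalises $L$ to lift constant paths to constant paths, which can always be arranged.
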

\begin{proof}
Suppose that the initial parametrised algorithm $s$ was continuous on an open subset 
$U\subset E^2_B$. Practically this means that the output of the parametrised algorithm $s$ varies continuously with variations of the input 
(i.e. the initial and final configurations and the state of the external conditions) while the input remains in $U$. 
From formula (\ref{eq:16d}) we see that the extended algorithm $\tilde s$ will be continuous on the subset 
$$
\tilde U= Q^{-1}(U)\subset B^I\times_B E^2_B,
$$
where $Q: B^I\times_B E^2_B\to E^2_B$ is given by 
$$Q(\gamma, e, e')=(e, P_\gamma(e')).$$
Here $e, e'\in E$ and $\gamma\in B^I$ satisfy $p(e)=p(e')=\gamma(0)$. Recall that $P_\gamma$ denotes the operator of parallel transport along $\gamma$. 

Let $k=\tc[p:E\to B]$ denote the complexity of the initial problem. Then there exists an open cover 
$E^2_B =U_0\cup U_1\cup\dots \cup U_k$ with a continuous section $s_i:U_i\to E^I_B$ of fibration (\ref{pi}). 
Then the sets $Q^{-1}(U_i)$ form an open cover of the set 
$B^I\times_B E^2_B$ with continuous sections $\tilde s_i$ of the bundle $\tilde \Pi$. This means that the sectional category of the bundle 
$\tilde \Pi$ is smaller than or equal to $k=\tc[p:E\to B]$. On the other hand, the bundle $\tilde \Pi$ contains the bundle $\Pi$ as a restriction of a subset of the constant paths $B\subset B^I$ which implies the opposite inequality. 
We see that the sectional category of the bundle $\tilde \Pi$ equals 
the parametrised topological complexity $\tc[p:E\to B]$. This gives an alternative proof of Theorem \ref{thm:8}. 
\end{proof}

\section{Infinitesimal lifting functions and infinitesimal reaction mechanisms}\label{sec:ilf}

\subsection{} In the previous sections we considered motion algorithms for autonomous systems which work under the assumptions that either the external conditions are constant during the implementation of the algorithm, or the external conditions change but their variation is known in advance. 
In this section we shall discuss a new class of motion algorithms which are suitable for situations when the above assumptions are not valid. 
These algorithms are applicable when the system has comprehensive sensing capabilities, and performs continuous observations under the changes of the external conditions. 

The main feature of these new motion algorithms is that they are {\it incremental} in nature. 
The resulting motion of an autonomous system can be viewed as the outcome of a continuous sequence of decisions consisting of a choice of the velocity at each moment of time. 
Since the future changes in the external conditions are not known, 
the motion algorithm for the autonomous system can use only information available at the time (which is the current state of the external conditions, the current velocity of the external conditions, and, maybe, the higher derivatives of the motion of the external conditions as well). 
The algorithm produces the instantaneous velocity of the automated system and the automated system moves with this velocity for a very short period of time $t_0$ ({\it the time scale}). Then the new measurements of the external conditions are made and the new instantaneous velocity vector is produced, and the system moves again during $t_0$ seconds, and the process repeats. We formalise this approach below in terms of a differential equation. Note that the time scale may vary and depend on the external conditions and on the velocity of the autonomous system. 

We make an assumption that our autonomous system is equipped with sensing devices and is able to measure the state, the velocity and the acceleration of the external conditions without error and without time delay. 

The infinitesimal motion algorithm, which we define below, 
is  {\it \lq\lq Markov type\rq\rq}\ , i.e. the outcome of the algorithm at time $t$
{\it depends only} on the current state, the velocity and the acceleration of the system and external conditions at time $t$, and not on their past behaviour. 
Clearly, there could be situations when information about the past behaviour of the external conditions can also be useful, however we do not consider this point further in this work.

\subsection{Infinitesimal lifting functions and infinitesimal reaction mechanisms}\label{sec:irm} Consider the fibration of configuration spaces $p: E\to B$ involving the configuration spaces of the automated system and of the external conditions, see \S\ref{sec:13}. In this section we  shall assume that $E$ and $B$ are smooth manifolds and that $p$ is a smooth map. By 
$$\pi_E: TE\to E\quad\mbox{ and}\quad  \pi_B: TB\to B$$ we denote the tangent bundles of $E$ and $B$ correspondingly. The differential $dp: TE\to TB$ is a smooth map satisfying 
\begin{eqnarray}
 \pi_B \circ dp = p\circ \pi_E. 
\end{eqnarray}
We denote by $E\times_B TB$ the space of all pairs $(e, X)\subset E\times TB$ such that 
$p(e) =\pi_B( X)$. One has the map
\begin{eqnarray}\label{eq:frakp}
\mathfrak P: TE\to E\times_B TB, \quad \mbox{where}\quad \mathfrak P(Y)= (\pi_E(Y), dp(Y)), \quad \mbox{for}\quad Y\in TE. 
\end{eqnarray}

\begin{definition}\label{def:ilf}
An infinitesimal lifting function is a continuous map
\begin{eqnarray}\label{irm}
\mathcal L: E\times_B TB\to TE,
\end{eqnarray} 
satisfying $\mathfrak P\circ \mathcal L ={\rm id}$. Moreover, 
in the case when the space $E$ has a nonempty boundary, $\partial E\not=\emptyset$, we shall additionally require that for $e\in \partial E$ 
and 
$X\in T_{p(e)}(B)$ one has 
\begin{eqnarray}\label{eq:21d}
\mathcal L(e, X)\in T^{+}_e\subset T_e(E).
\end{eqnarray}
Here $T^{+}_e\subset T_e(E)$ denotes the closed half-space of the tangent space $T_e(E)$ consisting of vectors pointing inwards. 
\end{definition}
Note that the tangent space $T_e(E)$ is divided by the subspace $T_e(\partial E)\subset T_e(E)$ of the vectors tangent to the boundary into two half-spaces
$
T_eE = T_e^+\cup T_e^-, $
where $T_e^+ \cap T_e^- =T_e(\partial E).$
The half-space $T_e^+$ consists of {\it the admissible velocity vectors}, i.e. the vectors which point either inside $E$ or are tangent to the boundary $\partial E$.

\subsection{} Given a point $e\in E$ and a tangent vector $X\in T_{p(e)}B$, the function $\mathcal L$ produces a tangent vector $Y=\mathcal L(e, X)$ in the tangent space $T_eE$ satisfying $dp_e(Y)=X$. 

We shall now restate the purpose of $\mathcal L$ using the language of robotics: {\it given a state $e\in E$ of the system and of the external conditions which are consistent with each other, and the velocity of the motion of the external conditions $X$, the function $\mathcal L$ produces the velocity $Y=\mathcal L(e, X)$ of the system consistent with $X$.} The condition (\ref{eq:21d}) guarantees that the suggested velocity vector is indeed achievable.
This explains why an infinitesimal lifting function serves as {\it an infinitesimal reaction mechanism}. 

\subsection{}\label{sec:44} For a smooth curve $\gamma: J\to B$, where $J=[a, b)$ with $a<b\le \infty$ and for a point $e\in E$ satisfying $p(e)=\gamma(a)$, the lift $\tilde \gamma: J\to E$, where $\tilde \gamma=L(e, \gamma)$, is the solution of the initial value problem 
\begin{eqnarray}\label{eq:21a}
\frac{d}{dt}\left[\tilde\gamma(t)\right] = \mathcal L\left(\tilde\gamma(t), \frac{d}{dt}\left[\gamma(t)\right]\right), \quad \tilde \gamma(a)=e. 
\end{eqnarray}
If $\mathcal L$ is smooth the solution to (\ref{eq:21a}) exists locally and is unique. 
An infinitesimal lifting function $\mathcal L$  is said to be {\it complete} if the solution to (\ref{eq:21a}) exists globally for any pair 
$(e, \gamma)$ where $\gamma: J\to B$ is a smooth curve and  $p(e)=\gamma(a)$. 

A complete infinitesimal lifting function $\mathcal L: E\times_B TB\to TE$ defines {\it a smooth lifting function} 
$$
L: E\times_BC^{\infty}(J,B)\to C^{\infty}(J, E),
$$ 
where the symbols $C^{\infty}(J,B)$ and $C^{\infty}(J, E)$ denote the spaces of smooth maps $J\to B$ and $J\to E$ correspondingly. 
For $e\in E$ and $\gamma\in C^{\infty}(J,B)$ satisfying $p(e)=\gamma(a)$ the image $L(e, \gamma)=\tilde \gamma$ is the curve $\tilde\gamma: J\to E$ which is the solution of (\ref{eq:21a}). 



\subsection{Ehresmann connections.}\label{sec:ehresmann} The classical construction of Ehresmann connections  \cite{E}, \cite{W} leads to infinitesimal lifting functions which have the additional property of {\it linearity}. We recall this notion below. 

For a smooth bundle $p: E\to B$, 
the family of the subspaces $\mathcal V=\{V_e\}_{e\in E}$, where
$$V_e=\ker[dp_e: T_eE\to  T_{p(e)}B]$$
form {\it the vertical distribution}. This is the space of tangent vectors which are tangent to the fibres. The family of vertical tangent spaces $V_e$ 
form a sub-bundle 
\begin{eqnarray}\label{eq:vert}
T^VE\to E, \quad T^VE\subset TE
\end{eqnarray}
of the tangent bundle $TE$.
{\it An Ehresmann connection} for the bundle $p:E\to B$ is a differentiable distribution 
\begin{eqnarray}\label{ehr}
\mathcal H=\{H_e\}_{e\in E}\end{eqnarray}
which is complementary to the vertical distribution $\mathcal V$.
Given an Ehresmann connection (\ref{ehr}), each tangent space $T_eE$, where $e\in E$, is the direct sum $$T_eE=V_e\oplus H_e$$ and the differential 
$dp_e$ maps the horizontal subspace $H_e$ {\it isomorphically} onto $T_{b}B$, where $b=p(e)\in B$. Thus, for every tangent vector $X\in T_bB$ and for every $e\in E$ with $p(e)=b$ there exists {\it a unique lift} 
$$Y \, \in H_e\subset T_eE \quad\mbox{with}\quad  dp_e(Y)=X.$$
We shall denote 
$$
Y = \mathcal L(e, X).
$$
This defines  {\it an infinitesimal lifting function} associated with the Ehresmann connection $\mathcal H$, 
\begin{eqnarray}\label{ilfe}
\mathcal L: E\times_B TB\to TE.
\end{eqnarray} 

 \begin{definition}
 An Ehresmann connection is said to be complete if the associated infinitesimal lifting function $\mathcal L$ is complete, as defined in \S \ref{sec:44} above, i.e. if the ordinary differential equation (\ref{eq:21a}) admits a global solution for any initial condition. 
 \end{definition}
%

We cite the following result:
 
\begin{theorem}\cite{Hoyo} 
Every smooth locally trivial bundle $p:E\to B$ admits a complete Ehresmann connection. 
\end{theorem}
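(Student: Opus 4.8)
The plan is to reduce completeness to a quantitative non-escape property of horizontal lifts, and then to build the connection from local trivialisations so that this property holds.

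\emph{Reduction.} By the theorem of Nomizu--Ozeki the manifold $E$ carries a complete Riemannian metric; fix one, $g_E$, together with an arbitrary metric $g_B$ on $B$. For an Ehresmann connection $\mathcal H$, with associated infinitesimal lifting function $\mathcal L$ (see \S\ref{sec:ehresmann}), set
\[
\kappa_{\mathcal H}(e)\;=\;\sup\bigl\{\,|\mathcal L(e,X)|_{g_E}\ :\ X\in T_{p(e)}B,\ |X|_{g_B}=1\,\bigr\},\qquad e\in E ,
\]
a continuous function on $E$. I claim $\mathcal H$ is complete provided $\kappa_{\mathcal H}$ is bounded on $p^{-1}(K)$ for every compact $K\subseteq B$. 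Indeed, let $\gamma:[a,b)\to B$ be a smooth regular curve and let $\tilde\gamma:[a,c)\to E$, with $a<c\le b$, be its maximal horizontal lift with a prescribed initial value $\tilde\gamma(a)$. For each $c'\in(a,c)$ the image $\gamma([a,c'])$ lies in some compact set $K$, so $\tilde\gamma([a,c'])\subseteq p^{-1}(K)$, and writing $M=\sup_{p^{-1}(K)}\kappa_{\mathcal H}<\infty$ equation (\ref{eq:21a}) yields
\[
\Bigl|\frac{d}{dt}\tilde\gamma(t)\Bigr|_{g_E}\;=\;\Bigl|\mathcal L\Bigl(\tilde\gamma(t),\frac{d}{dt}\gamma(t)\Bigr)\Bigr|_{g_E}\;\le\;M\,\Bigl|\frac{d}{dt}\gamma(t)\Bigr|_{g_B}\qquad(t\in[a,c']),
\]
so $\tilde\gamma|_{[a,c')}$ has finite $g_E$-length. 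If $c<b$, then $\tilde\gamma(t)$ is $g_E$-Cauchy as $t\uparrow c$, hence converges by completeness of $g_E$ to some $e^*\in E$; solving (\ref{eq:21a}) locally with $\tilde\gamma(c)=e^*$ and using uniqueness extends the lift beyond $c$, contradicting maximality. Hence $c=b$ when $b<\infty$, and the case $b=\infty$ follows by exhausting $[a,\infty)$ by compact subintervals. Thus the theorem is reduced to producing an Ehresmann connection whose distortion $\kappa_{\mathcal H}$ is bounded on the preimage of every compact subset of $B$.

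\emph{An adapted complete metric.} As $B$ is paracompact, pick a locally finite cover $\{U_i\}$ by relatively compact open sets trivialising $p$, trivialisations $\phi_i:p^{-1}(U_i)\xrightarrow{\ \sim\ }X\times U_i$, and a subordinate partition of unity $\{\rho_i\}$; fix a complete metric $h$ on $X$ and a complete metric $g_B$ on $B$ and set $g_E=\sum_i(\rho_i\circ p)\,\phi_i^{*}(h\oplus g_B)$. Then $g_E$ is complete: each summand dominates $p^{*}g_B$, so $p:(E,g_E)\to(B,g_B)$ is $1$-Lipschitz and a $g_E$-Cauchy sequence projects to a $g_B$-convergent one, $b_n\to b_\infty$; picking $j$ with $\rho_j(b_\infty)>0$, near $b_\infty$ the metric $g_E$ dominates a fixed positive multiple of $\phi_j^{*}(h\oplus g_B)$, which in the chart $\phi_j$ is a product of complete metrics, forcing convergence in $E$.

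\emph{The estimate and the main obstacle.} It remains to choose the connection so that $\kappa_{\mathcal H}$ is bounded over each $p^{-1}(K)$. A natural candidate is $\mathcal H=(\ker dp)^{\perp_{g_E}}$; over a compact $K$ only finitely many $\rho_i$ are relevant, and $\kappa_{\mathcal H}$ on $p^{-1}(K)$ can be estimated by comparing the orthogonal lift of a unit $X$ with the product lifts $\phi_i^{*}(0\oplus X)$, which reduces the bound to controlling, in the metric $h$, the vertical components of these product lifts read through neighbouring charts --- i.e.\ the first derivatives of the transition maps $\phi_i\phi_j^{-1}$ over the relatively compact base overlaps. The non-compactness of the fibre $X$ is the real difficulty: these derivatives need not stay bounded along the fibre, so the trivialising atlas and the metrics $h,g_B$ must be chosen with care (alternatively, one starts from an arbitrary Ehresmann connection and adds vertical correction vector fields so that a fixed proper exhaustion of $E$ grows only slowly along horizontal lifts); this fibrewise control is the technical heart of del Hoyo's argument. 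Granting it, $\sup_{p^{-1}(K)}\kappa_{\mathcal H}<\infty$ and the reduction step finishes the proof. I expect precisely this point --- excluding finite-time escape of horizontal lifts in the fibre direction over a compact part of the base --- to be the main obstacle, the reduction and the completeness of $g_E$ being routine.
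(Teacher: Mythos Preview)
The paper itself does not prove this theorem: it is simply quoted with the citation \cite{Hoyo} and no argument is given. There is therefore nothing in the paper to compare your attempt against.

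On its own terms: your reduction step is correct --- if $g_E$ is complete and $\kappa_{\mathcal H}$ is bounded on $p^{-1}(K)$ for every compact $K\subset B$, then any maximal horizontal lift over $[a,c)$ with $c<b$ has finite $g_E$-length (use the single compact set $K=\gamma([a,c])$ rather than one varying with $c'$) and hence extends by completeness of $g_E$. Your construction of a complete $g_E$ by gluing local product metrics is also sound. However, the proposal stops exactly where the content begins: you write ``Granting it'' and leave the bound on $\kappa_{\mathcal H}$ over $p^{-1}(K)$ unproven, while yourself flagging this as the crux. That is not a side issue --- it \emph{is} the theorem. What you have submitted is an accurate roadmap with the one nontrivial step left blank; by your own admission it is not a proof.

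If you want to close the gap, note that trying to bound the fibre-direction derivatives of the transition maps over a noncompact fibre is genuinely hard and not how del~Hoyo proceeds. His route sidesteps the estimate entirely: one arranges a complete metric $g_E$ on $E$ for which $p$ is a Riemannian submersion onto a complete $(B,g_B)$. The orthogonal connection then satisfies $\kappa_{\mathcal H}\equiv 1$, and your reduction step finishes the argument in one line. All the work is pushed into producing such a metric, which is where the cited paper earns its keep.
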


Next we state the following obvious lemma:
\begin{lemma}\label{lm:ehr}
The infinitesimal lifting function (\ref{ilfe}) arising from an Ehresmann connection has the following linearity properties:
\begin{eqnarray}\label{prop1}
\mathcal L(e, X_1+X_2)= \mathcal L(e, X_1)+\mathcal L(e, X_2) \quad \mbox{for} \quad X_1, X_2\in TB,
\end{eqnarray}
and 
\begin{eqnarray}\label{prop2}
\mathcal L(e, \lambda X)=\lambda \mathcal L(e, X)\quad \mbox{for every}\quad \lambda\in \R.
\end{eqnarray}
And vice versa, any infinitesimal lifting function (as defined in Definition \ref{def:ilf}) possessing the linearity properties (\ref{prop1}) and (\ref{prop2}) originates from an Ehresmann connection. 
\end{lemma}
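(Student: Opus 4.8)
The plan is to prove the two implications separately; each is short.

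\emph{The connection induces a linear lifting function.} Suppose $\mathcal L$ arises from an Ehresmann connection $\mathcal H=\{H_e\}$. By definition $\mathcal L(e,X)$ is the unique vector of $H_e$ sent by $dp_e$ to $X$, so if I write $\phi_e:=dp_e|_{H_e}\colon H_e\to T_{p(e)}B$ for the restricted differential --- a linear isomorphism, precisely because $\mathcal H$ is complementary to $\mathcal V$ --- then $\mathcal L(e,\cdot)=\phi_e^{-1}$. The inverse of a linear isomorphism is linear, so (\ref{prop1}) and (\ref{prop2}) hold at once.

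\emph{A linear lifting function comes from a connection.} Conversely, given an infinitesimal lifting function $\mathcal L$ satisfying (\ref{prop1}) and (\ref{prop2}), I would set
\begin{eqnarray*}
H_e\ :=\ \mathcal L\bigl(e, T_{p(e)}B\bigr)\ =\ \{\mathcal L(e,X)\ :\ X\in T_{p(e)}B\}\ \subset\ T_eE.
\end{eqnarray*}
Additivity and homogeneity make $H_e$ a linear subspace of $T_eE$ and make $X\mapsto\mathcal L(e,X)$ a linear surjection $T_{p(e)}B\to H_e$; since $\mathfrak P\circ\mathcal L=\mathrm{id}$ gives $dp_e(\mathcal L(e,X))=X$, this map is a section of $dp_e|_{H_e}$, hence injective, so $dp_e$ restricts to an isomorphism $H_e\xrightarrow{\ \sim\ }T_{p(e)}B$. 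Putting $\lambda=0$ in (\ref{prop2}) yields $\mathcal L(e,0)=0$; therefore any $Y=\mathcal L(e,X)\in H_e\cap V_e$ has $X=dp_e(Y)=0$, whence $Y=\mathcal L(e,0)=0$, i.e.\ $H_e\cap V_e=0$. A smooth locally trivial bundle is a submersion, so $\dim V_e=\dim E-\dim B$ and $\dim H_e=\dim B$; the dimension count forces $T_eE=V_e\oplus H_e$. By the first part, the lifting function induced by the distribution $\mathcal H=\{H_e\}$ is again $\mathcal L$.

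\emph{Differentiability of the distribution.} To see that $\mathcal H$ is an Ehresmann connection in the sense of \S\ref{sec:ehresmann} I would check differentiability locally: near a point $b_0\in B$ pick a local frame $X_1,\dots,X_m$ of $TB$ with $m=\dim B$; then the vector fields $e\mapsto\mathcal L\bigl(e,X_i(p(e))\bigr)$, $i=1,\dots,m$, are defined over the preimage under $p$ of a neighbourhood of $b_0$, inherit the regularity of $\mathcal L$, span $H_e$ at every point by (\ref{prop1})--(\ref{prop2}), and are pointwise independent because their $dp_e$-images are the independent vectors $X_i(p(e))$. Hence $\mathcal H$ is locally framed by vector fields of the same smoothness class as $\mathcal L$.

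The only genuinely delicate point --- and the one I would expect to draw a referee's attention --- is regularity: Definition \ref{def:ilf} asks only for continuity of $\mathcal L$, whereas an Ehresmann connection is by definition a \emph{differentiable} distribution. The converse is thus to be understood under the natural standing hypothesis that $\mathcal L$ is smooth, in which case the local-frame argument produces a smooth distribution and no obstacle remains; the rest is linear algebra plus a dimension count, which is why the lemma is stated as obvious.
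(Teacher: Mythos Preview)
Your proof is correct, and in fact the paper provides no proof at all: the lemma is announced as ``obvious'' and the reader is referred to \cite{KMS} for background. Your argument is exactly the natural one --- identify $\mathcal L(e,\cdot)$ with $(dp_e|_{H_e})^{-1}$ for the forward direction, and define $H_e$ as the image of $\mathcal L(e,\cdot)$ for the converse, checking complementarity by a dimension count --- so there is nothing to contrast.

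Your observation about regularity is a genuine refinement over the paper's treatment: Definition~\ref{def:ilf} indeed demands only continuity of $\mathcal L$, while \S\ref{sec:ehresmann} asks for a \emph{differentiable} distribution, so the converse as literally stated requires the tacit extra hypothesis you make explicit. The paper does not address this point.
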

We refer to the book  \cite{KMS} for additional information and some general results.

\subsection{Affine structure on the space of infinitesimal lifting functions and reaction forms} 
Note that the space of all infinitesimal lifting functions (infinitesimal reaction mechanisms) has a natural affine structure. 
Let 
$$\mathcal L_1, \mathcal L_2: E\times_B TB\to TE$$ be two infinitesimal lifting functions, and 
let 
$
\theta: E\to \R
$
be a smooth function. We can now define $\mathcal L_\theta: E\times_B TB\to TE$ as an affine combination
\begin{eqnarray}\label{eq:aff}
\mathcal L_\theta (e,X)  = \theta (e) \cdot \mathcal L_1(e, X)+ (1-\theta(e))\cdot \mathcal L_2(e, X).
\end{eqnarray}
If $Y_i=\mathcal L_i(e, X)$ for $i=1, 2$, then $\mathcal L_\theta(e, X)=\theta(e)\cdot Y_1+(1-\theta(e))\cdot Y_2$ and 
$$
dp_e(\mathcal L_\theta (e,X)) = \theta(e)\cdot dp_e(Y_1)+(1-\theta(e))\cdot dp_e(Y_2)= \theta(e)\cdot X + (1-\theta(e))\cdot X=X. 
$$
We see that $\mathcal L_\theta$ is an infinitesimal lifting function.

One may be interested in the linear space associated with the space of infinitesimal lifting functions. This is the space of {\it reaction forms} consisting of maps
\begin{eqnarray}
\ell: E\times_BTB \to T^{V}E
\end{eqnarray}
such that the composition
\begin{eqnarray}
E\times_B TB\, \, \stackrel{\ell} \to T^VE\, \,  \stackrel{\pi_E}\to\,  E
\end{eqnarray}
coincides with the projection $ E\times_B TB\to E$ on the first factor. The latter condition simply means that $\ell(e, X)$ lies in the space $T^V_eE$, i.e. $\ell(e, X)$ is a vector tangent to the fibre at  the point $e$. 
The reaction form $\ell$ produces a vertical tangent vector representing the velocity of the autonomous object 
$$\ell(e, X)\in T^VE, \quad \mbox{where}\quad e\in E, \quad X\in T_{p(e)}B,$$ 
which depends on the velocity of the external conditions $X\in T_{p(e)}B$; the vector  $\ell(e, X)$ can be viewed as {\it \lq\lq a reaction\rq\rq}\ to the change represented by $X$.

Note that the reaction form $\ell(e, X)$ does not need to be linear with respect to $X$, as illustrated by Example \ref{ex:46} and some other examples below.

\subsection{Combining reaction forms}
The set of all reaction forms is a linear space: 
if $\ell_1$ and $\ell_2$ are two reaction forms on the bundle $p:E\to B$ then their sum $\ell_1+\ell_2$ is a well defined reaction form. Moreover, if 
$\mathcal L: E\times_BTB\to TE$ is an infinitesimal reaction mechanism 
and $\ell:  E\times_BTB \to T^VE$ is a reaction form 
then the sum 
$$\mathcal L' = \mathcal L+\ell: E\times_BTB\to TE$$ 
is another infinitesimal reaction mechanism. 

 The above discussion leads to the following:
\begin{corollary}\label{cor:45}
A general infinitesimal reaction mechanism $\mathcal L: E\times_BTB\to TE$ can be constructed as the sum
\begin{eqnarray}
\mathcal L= \mathcal L_0+ \ell_1+\ell_2+\dots+\ell_N,
\end{eqnarray}
where $\mathcal L_0$ is a fixed infinitesimal reaction mechanism (say, an Ehresmann connection) and $\ell_1, \dots, \ell_N$ are reaction forms. 
\end{corollary}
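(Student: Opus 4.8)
The plan is to observe that the statement is a formal consequence of the affine structure on the space of infinitesimal reaction mechanisms established just above, which is modelled on the linear space of reaction forms; in fact it already holds with $N=1$, and the decomposition into several summands is an optional refinement.

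First I would set $\ell := \mathcal L - \mathcal L_0$, defined pointwise by $\ell(e,X) = \mathcal L(e,X) - \mathcal L_0(e,X)$. This is meaningful as an element of $T_eE$: since $\mathfrak P\circ\mathcal L = \mathfrak P\circ\mathcal L_0 = {\rm id}$, in particular $\pi_E(\mathcal L(e,X)) = e = \pi_E(\mathcal L_0(e,X))$, so both vectors lie in the single fibre $T_eE$ and their difference is well defined there; continuity of $\ell$ follows from that of $\mathcal L$, $\mathcal L_0$ and of fibrewise subtraction in $TE$. Next I would check that $\ell$ is a reaction form in the sense introduced above, i.e. that it takes values in $T^VE$ and that $\pi_E\circ\ell$ is the projection onto the first factor. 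The latter is immediate from the previous step. For the former, apply the differential: since $\mathcal L$ and $\mathcal L_0$ are infinitesimal lifting functions one has $dp_e(\mathcal L(e,X)) = X = dp_e(\mathcal L_0(e,X))$, so by linearity $dp_e(\ell(e,X)) = X - X = 0$, whence $\ell(e,X)\in\ker dp_e = V_e = T^V_eE$. Thus $\ell$ is a reaction form and, by construction, $\mathcal L = \mathcal L_0 + \ell$, which is the asserted decomposition with $N=1$.

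Finally, to obtain several summands one simply splits $\ell$. The trivial option is $\ell = \ell + 0 + \dots + 0$. A more useful splitting, which I expect is the intended reading, is to choose an open cover $\{U_i\}_{i=1}^N$ of $E$ over which $p$ is trivial, together with a subordinate partition of unity $\{\rho_i\}$, and put $\ell_i(e,X) := \rho_i(e)\cdot\ell(e,X)$; each $\ell_i$ is again a reaction form (multiplication by a scalar function on $E$ preserves the vertical-subbundle condition because the linear structure on reaction forms is fibrewise over $E$) and $\sum_{i=1}^N\ell_i = \ell$, so $\mathcal L = \mathcal L_0 + \ell_1 + \dots + \ell_N$ with each $\ell_i$ supported in a trivialising chart. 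I do not expect any genuine obstacle: the only point to watch is that a finite $N$ in this last splitting needs $E$ to admit a finite trivialising cover (e.g. $E$ compact), while the essential content — that the difference $\mathcal L - \mathcal L_0$ is always a reaction form — requires no hypothesis at all, and the corollary as stated places no constraint on $N$.
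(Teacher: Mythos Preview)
Your argument is correct and is precisely the content of the paper's approach: the corollary is stated there as an immediate consequence of the preceding discussion that the space of infinitesimal reaction mechanisms is affine, modelled on the linear space of reaction forms, so that $\mathcal L-\mathcal L_0$ is a reaction form. Your additional remark about splitting $\ell$ via a partition of unity is not in the paper (which gives no explicit proof and only comments afterwards that the $\ell_i$ are typically supported near boundary components), but it is a harmless and reasonable elaboration.
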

Typically, the support of the reaction forms $\ell_1, \dots, \ell_N$ is concentrated near the boundary components of the compound configuration space $E$.

\begin{example}[The \lq\lq Moving away manoeuvre\rq\rq]\label{ex:46}{\rm 
Here we give an example of a nonlinear reaction form. Suppose that the fibre $X$ of the bundle of configuration spaces $p:E\to B$ is a manifold with boundary, $\partial X\not=\emptyset$. Then $\partial E\not=\emptyset$. 
Let $N$ be a smooth vector field on $E$ which is tangent to the fibres of the fibration $p: E\to B$, i.e. such that 
$$dp_\ast(N)=0.$$ 
We shall assume that the vector field $N$ has support in a neighbourhood of the boundary $\partial E$
and  that $N$ points inside of $E$ at the boundary $\partial E$. 
\begin{figure}[h]
\begin{center}
\includegraphics[scale=0.4]{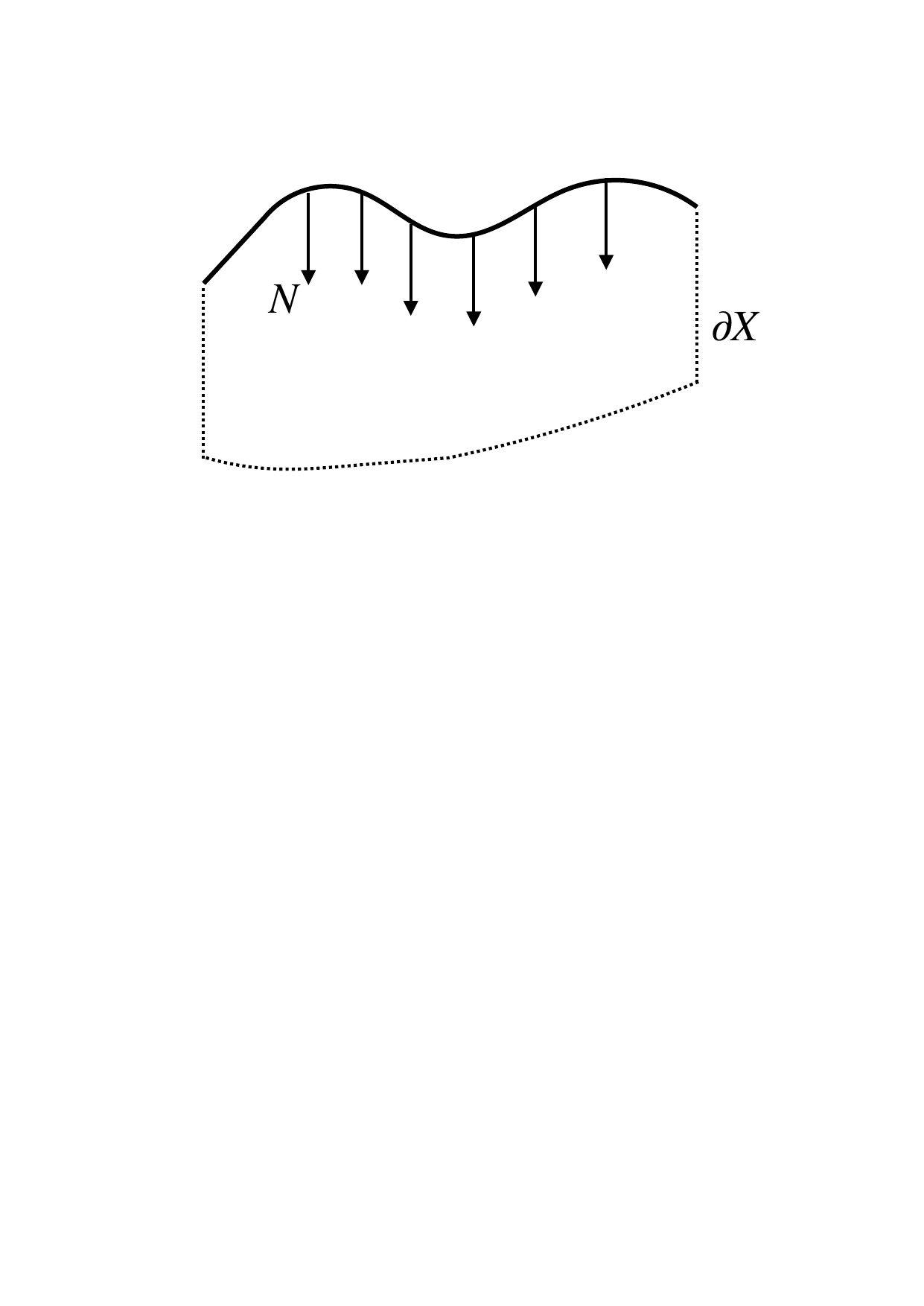}
\caption{Vector field $N$ near $\partial X$.}
\label{default}
\end{center}
\end{figure}

We may define a reaction form $\ell$ by
\begin{eqnarray}\label{eq:elln}
\ell(e, X) = \psi(e)\cdot |X|\cdot N(e),
\end{eqnarray}
where $|X|$ denotes the length of the tangent vector $X\in T_{p(e)}B$ with respect to a Riemannian metric and 
$$\psi: E\to [0,1]$$ is a smooth function 
which vanishes outside a neighbourhood of the boundary $\partial E$ and equals $1$ near $\partial E$. 
We see that (\ref{eq:elln}) is a positive homogeneous (but not linear) reaction form. The effect of $\ell$ can be described as {\it \lq\lq pushing away from the boundary\rq\rq}.  

We say that $\psi$ is {\it an actuation function}; it switches on the action of the reaction form only when the system approaches the boundary $\partial E$. 

Note that the boundary $\partial E$ limits the space of admissible configurations and the system should {\it start \lq\lq reacting\rq\rq}\ when it is near the boundary.  
The configurations lying on the boundary $\partial E$ represent tangencies with the obstacles and the velocity of our system at such configurations is limited. 
}
\end{example}

\begin{lemma}\label{lm:47}
Let $\mathcal L: E\times_BTB\to TE$ be a linear infinitesimal reaction mechanism, i.e. an Ehresmann connection. If $e\in \partial E$ then for any 
tangent vector 
$X\in T_{p(e)}B$ the tangent vector $Y=\mathcal L(e, X)$ belongs to the codimension one subspace $T_e(\partial E)\subset T_eE$ of vectors tangent to the boundary. 
\end{lemma}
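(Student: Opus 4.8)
The plan is to prove the equivalent statement that the horizontal subspace $H_e$ of the Ehresmann connection is contained in the hyperplane $T_e(\partial E)\subset T_eE$; since $Y=\mathcal L(e,X)\in H_e$ by the construction of \S\ref{sec:ehresmann}, this yields the claim. The argument combines an elementary structural remark about $\partial E$ with the linearity (\ref{prop1})--(\ref{prop2}) of $\mathcal L$ and the requirement that an infinitesimal reaction mechanism lift motions of the environment to \emph{motions of the compound system}, i.e. to paths that stay in the configuration space $E$.

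First I would note that $\partial E$ is a subbundle of $p$: in a local trivialisation $p^{-1}(U)\cong X\times U$ one has $\partial E\cap p^{-1}(U)\cong\partial X\times U$, so $p|_{\partial E}\colon\partial E\to B$ is again locally trivial, with fibre $\partial X$, and for $e\in\partial E$ the restriction of $dp_e$ to $T_e(\partial E)$ remains surjective onto $T_{p(e)}B$ with kernel $V_e\cap T_e(\partial E)=T_e(\partial E_{p(e)})$ of dimension $\dim X-1$. Hence $T_e(\partial E)$ has dimension $\dim E-1$ and, once we know $H_e\subseteq T_e(\partial E)$, the splitting $T_eE=V_e\oplus H_e$ restricts to $T_e(\partial E)=(V_e\cap T_e(\partial E))\oplus H_e$, so that $\mathcal L$ is in fact an Ehresmann connection on $p|_{\partial E}$.

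The crux is to show $Y=\mathcal L(e,X)\in T_e(\partial E)$ for an arbitrary $X\in T_{p(e)}B$. If $X=0$ then $Y=0$ by linearity, so assume $X\neq0$ and suppose, for contradiction, that $Y\notin T_e(\partial E)$. Because $\partial E$ is the boundary hypersurface of the manifold-with-boundary $E$, the vector $Y$ is then either strictly inward-pointing or strictly outward-pointing, and by (\ref{prop1})--(\ref{prop2}) the vector $\mathcal L(e,-X)=-Y$ points to the opposite side. Choose a regular curve $\gamma\colon[0,\varepsilon)\to B$ with $\gamma(0)=p(e)$ whose initial velocity $\dot\gamma(0)$ equals whichever of $X,-X$ makes the lift velocity $\mathcal L(e,\dot\gamma(0))$ strictly outward-pointing. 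Then the initial value problem (\ref{eq:21a}) with initial condition $e$ admits no solution $\tilde\gamma$ taking values in $E$: in a boundary chart the first coordinate of $\tilde\gamma(t)$ would be negative for all small $t>0$. This contradicts the fact that $\mathcal L$, being an infinitesimal reaction mechanism, must lift every motion $\gamma$ of the external conditions to a motion $\tilde\gamma$ of the compound system, that is, to a path in $E$. Therefore $Y\in T_e(\partial E)$, and since $e\in\partial E$ and $X$ were arbitrary we conclude $H_e\subseteq T_e(\partial E)$.

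I expect the only genuinely delicate point to be the justification that the lifted curve must remain in $E$ at boundary points --- equivalently, that a legitimate Ehresmann reaction mechanism is tangent to $\partial E$ along $\partial E$ (for instance because one insists on completeness, so that lifts of curves in \emph{all} directions exist). This is precisely where the linearity hypothesis enters and why it is restrictive: a linear mechanism cannot react to a moving obstacle by moving away from it, which is exactly the phenomenon that forces the nonlinear reaction forms of Example \ref{ex:46}. Everything else is the routine linear algebra of the two splittings above.
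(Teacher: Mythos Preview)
Your argument is correct and follows essentially the same route as the paper: both use linearity (\ref{prop2}) to flip $Y$ to $-Y=\mathcal L(e,-X)$ and then observe that one of $Y,-Y$ would be an inadmissible (outward-pointing) velocity at a boundary point, forcing $Y\in T_e(\partial E)$. The paper simply postulates the admissibility condition $\mathcal L(e,X)\in T_e^+$ directly and does the half-space linear algebra in two lines, whereas you derive the same condition via the curve-lifting interpretation of (\ref{eq:21a}); your opening paragraph on $\partial E$ as a subbundle is extra (and correct) context that the paper omits.
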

\begin{proof} As in Definition \ref{def:ilf}, the tangent space $T_eE$ is divided by the subspace $T_e(\partial E)$ into two half-spaces
$$
T_eE = T_e^+\cup T_e^-, \quad \mbox{where}\quad T_e^+ \cap T_e^- =T_e(\partial E).
$$
Here, $T_e^+$ is the space of admissible velocity vectors, i.e. the vectors which point either inside $E$ or are tangent to the boundary $\partial E$. 
According to Definition \ref{def:ilf}, an infinitesimal reaction mechanism should satisfy 
$$\mathcal L(e, X)\in T_e^+\quad\mbox{ for any}\quad  X\in T_{p(e)}B$$
since only the vectors from $T_e^+$ are velocity vectors of admissible motions of the system. 
If $Y=\mathcal L(e, X)\in T_e^+$ then, due to linearity, one has 
$\mathcal L(e, -X)=-Y\in T_e^+$ and hence 
$Y\in T_e^- \cap T_e^+=T_e(\partial E).$
Thus, we see that $\mathcal L(e, X)\in T_e(\partial E)$ for all $e\in \partial E$ and $X\in T_{p(e)}B$. 
\end{proof}
\begin{corollary}\label{cor:48}
No linear infinitesimal reaction mechanism (an Ehresmann connection) can implement the task of {\it pushing away from the boundary} and hence, the nonlinearity which appears in 
(\ref{eq:elln}), is unavoidable. 
\end{corollary}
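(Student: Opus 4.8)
The plan is to obtain Corollary~\ref{cor:48} as an immediate consequence of Lemma~\ref{lm:47}, once the phrase \emph{pushing away from the boundary} is given a precise meaning. I would say that an infinitesimal reaction mechanism $\mathcal L\colon E\times_B TB\to TE$ \emph{pushes away from the boundary} if there exist a point $e\in\partial E$ and a tangent vector $X\in T_{p(e)}B$ such that $\mathcal L(e,X)$ points strictly into the interior of $E$, i.e. $\mathcal L(e,X)\in T_e^+\setminus T_e(\partial E)$ in the notation used in the proof of Lemma~\ref{lm:47}. With this definition the reaction form of Example~\ref{ex:46} pushes away: near $\partial E$ one has $\psi\equiv 1$, so $\ell(e,X)=|X|\,N(e)$, and this lies in $T_e^+\setminus T_e(\partial E)$ whenever $X\neq 0$, because $N$ points strictly inward at $\partial E$.

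First I would invoke Lemma~\ref{lm:47}: if $\mathcal L$ is linear, hence arises from an Ehresmann connection, then for every $e\in\partial E$ and every $X\in T_{p(e)}B$ the vector $\mathcal L(e,X)$ lies in the codimension-one subspace $T_e(\partial E)$. This contradicts the requirement $\mathcal L(e,X)\notin T_e(\partial E)$ in the definition above. Hence no linear reaction mechanism pushes away from the boundary, which is the first assertion of the corollary.

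For the second assertion --- that the nonlinearity in~(\ref{eq:elln}) is unavoidable --- I would give two equivalent arguments. Directly: from~(\ref{eq:elln}) one has $\ell(e,-X)=|{-X}|\,\psi(e)\,N(e)=|X|\,\psi(e)\,N(e)=\ell(e,X)$, so $\ell(e,-X)\neq-\ell(e,X)$ at any point where $\psi(e)N(e)\neq 0$; thus $\ell$ violates the homogeneity property~(\ref{prop2}) and is not linear. Structurally: by Corollary~\ref{cor:45} a pushing-away reaction mechanism can be written as $\mathcal L_0+\ell$ with $\mathcal L_0$ an Ehresmann connection; if $\ell$ were a linear reaction form, then $\mathcal L_0+\ell$ would again satisfy~(\ref{prop1}) and~(\ref{prop2}), hence by Lemma~\ref{lm:ehr} would come from an Ehresmann connection, contradicting the first assertion. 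Either way the nonlinearity is forced.

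I expect no real obstacle here: the substance lies entirely in Lemma~\ref{lm:47}, whose proof exploits linearity together with the fact that $T_e^+$ is a half-space with $-T_e^+=T_e^-$. The only point needing care is to fix the definition of \lq\lq pushing away from the boundary\rq\rq\ precisely enough that the conclusion of Lemma~\ref{lm:47} contradicts it literally; after that, both parts of the corollary are one-line deductions.
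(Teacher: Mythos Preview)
Your proposal is correct and follows the paper's approach: the paper states Corollary~\ref{cor:48} without proof, immediately after Lemma~\ref{lm:47}, treating it as a direct consequence. Your added precision---fixing the meaning of ``pushing away from the boundary'' and checking that~(\ref{eq:elln}) indeed violates~(\ref{prop2})---is sound and simply spells out what the paper leaves implicit.
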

\subsection{Navigation target} \label{sec:target} Our ultimate goal is to create an infinitesimal reaction mechanism for the autonomous system which 
eventually takes our system to a prescribed target state, regardless of the behaviour of the external conditions. 

The target is a specific state of our system which we wish it to achieve. Typically, the target state can be consistent with various external conditions. If a point 
$b\in B$ represents a state of the external conditions then the target consistent with $b$ is a specific point $e\in p^{-1}(b)$. 
This suggests that the target can be viewed as being a section 
of the fibration $p:E\to B$, i.e. a continuous map, 
$s: B\to E\quad\mbox{satisfying}\quad  p\circ s=1_B,$ 
where $1_B: B\to B$ is the identity map. 

As we shall see in the following  \S \ref{sec:5}, 
in general {\it the target} should be viewed as a section $s: A\to E$ defined on {\it a subset $A\subset B$ of admissible states} of external conditions, see \S \ref{sec:53}. For the 
external conditions $b\in B-A$ the target {\it is not available}.

%

\section{Example: two autonomous robots (vehicles) on the plane} \label{sec:5} 
The purpose of this section is to illustrate the material of the previous \S \ref{sec:ilf} by a specific toy example which resembles the situation in automated driving. 
We shall examine several infinitesimal reaction mechanisms and their effect on the motion of the system. 

\subsection{} We consider in this section two robots (vehicles), $M$ and $N$,  moving along the plane $\R^2$. The vehicle $M$ represents an automated system (ego) and our task is to design its motion algorithm. $N$ is another vehicle which moves according to its own goals and $M$ \lq\lq does not know\rq\rq\ the future behaviour of $N$. However, we assume that $M$ is able to measure the positions of both systems $M$ and $N$ and their velocities.

We shall represent $M$ and $N$ by Euclidean disks of radius $1$, and the symbols $c_M\in \R^2$ and $c_N\in \R^2$ will denote their centres. 
For us the disks are not exactly the robots themselves; they rather represent certain {\it safety zones} around each of the vehicles. 
\begin{figure}[h]
\begin{center}
\includegraphics[scale=0.4]{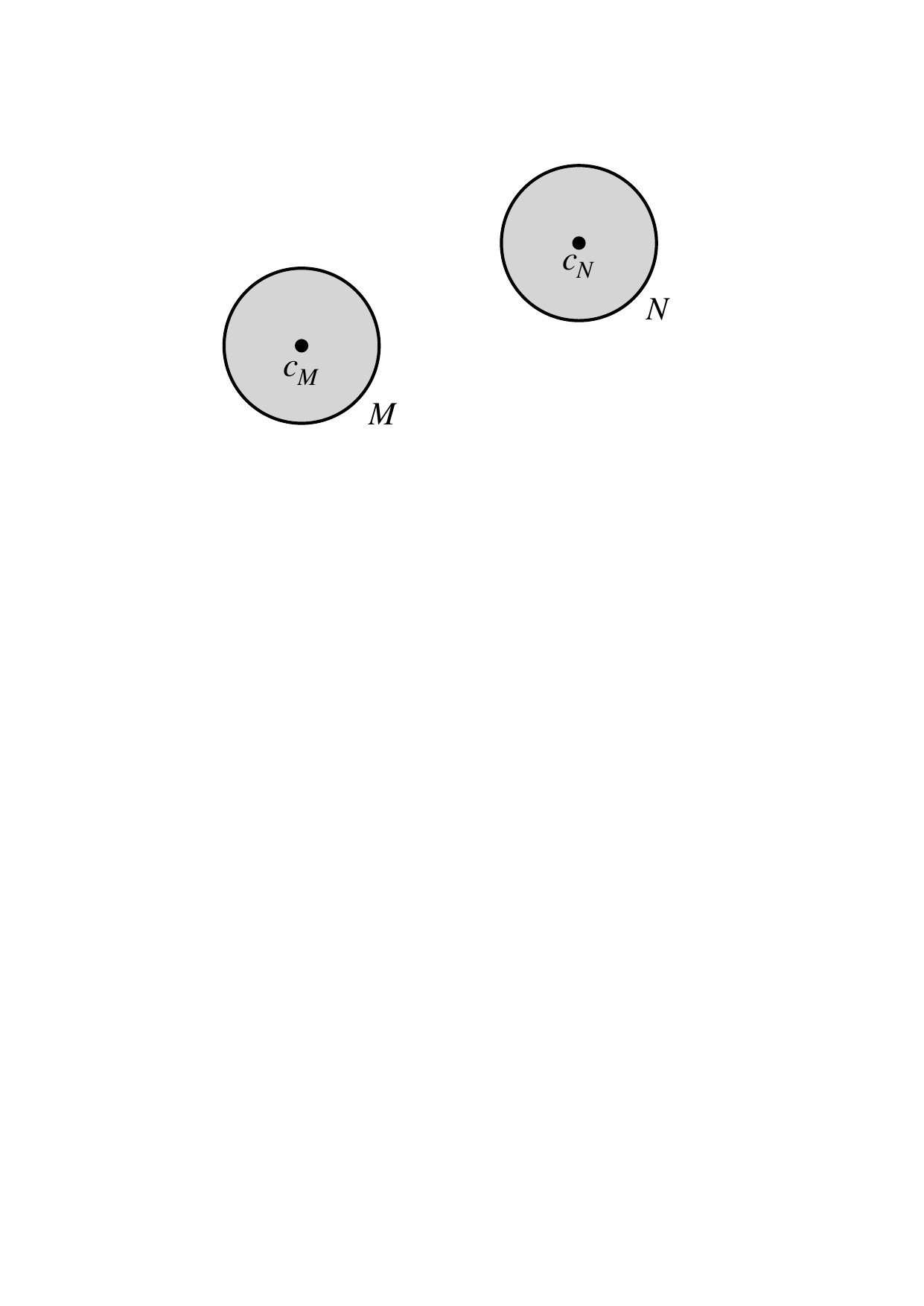}
\caption{A configuration of two robots $M$ and $N$.}
\label{fig:robots}
\end{center}
\end{figure}
The safety requirements impose the rule that {\it the interiors of the disks must be disjoint}.

We think of the robots as rigid bodies; their states are determined by the positions of their centres $c_M, c_N\in \R^2$,
 see Figure \ref{fig:robots}. 
The admissible configurations of $M$ and $N$ satisfy
\begin{eqnarray}\label{eq:32a}
|c_M-c_N|\ge 2.
\end{eqnarray}
In other words, we allow the states when the safety disks are disjoint or they
touch each other along their boundaries and we require that the interiors of the disks are disjoint at all times. 

As we focus on a motion algorithm for $M$, we shall view the robot $N$ as an obstacle (the external conditions). The motion algorithm for $M$ must (a) ensure that no collision with $N$ occurs and (b) $M$ arrives at a prescribed target state at certain moment of time. With this purpose in mind we shall analyse infinitesimal reaction mechanisms for $M$ as defined in \S \ref{sec:irm}. 

\subsection{} We denote by $E$ the configuration space of the pair $M, N$. Clearly, $E$ can be identified with 
\begin{eqnarray}\label{eq:e}
E= \{(c_M, c_N)\in \R^2\times\R^2\, ;\,  |c_M-c_N|\ge 2\}.
\end{eqnarray}
We note that $E$ is a manifold with boundary. The configuration space of $N$ (the external conditions) is 
$
B=\{c_N\} = \, \R^2
$
and the fibration of configuration spaces 
$
p:E\to B
$  
 (see \S \ref{sec:13}) is the map 
$(c_M, c_N)\mapsto c_N.$
The fibre $p^{-1}(c_N)$ can be identified with $\R^2-D$, where $D\subset \R^2$ is the open ball of radius $2$ with centre 
$c_N$. It is easy to see that the fibration $p:E\to B$ is trivial in this case, i.e. it admits a global trivialisation.

\subsection{}\label{sec:53t} {\it The target state} is a prescribed state $c_M^{\rm tag}\in \R^2$ to which the robot $M$ aims to arrive. 
As in \S \ref{sec:target} we may view the target as a section 
$s: A\to E$ given by 
$s(c_N)=c_M^{\rm tag}$. Here $A\subset B$ is the set 
$$A=\{c_N\in \R^2; \, |c_N-c_M^{\rm tag}|\ge 2\}.$$ 
The set $A$ is the set of external conditions under which the target state is available. Note that if $|c_N-c_M^{\rm tag}|< 2$ then the disc $N$ blocks the target state $c_M^{\rm tag}$ 
and the system $M$ is not able to get to the target. 

\subsection{} Consider the differential $dp: TE\to TB$. A tangent vector at a configuration $e=(c_M, c_N)
\in E$ can be identified with the tuple 
$$(v_M, v_N)\in \R^2\times\R^2,$$ where $v_M, v_N\in \R^2$ are the velocities of the centres $c_M, c_N$.
If $|c_M-c_N|>2$ then any such tangent vector can be realised by a smooth motion in the configuration space $E$. However if $|c_M-c_N|=2$ then we need to require that 
\begin{eqnarray}\label{eq:26}
\langle v_M-v_N, c_M-c_N\rangle \, \ge 0.
\end{eqnarray}
This condition guarantees that the distance between the centres of the disks does not decrease infinitesimally and hence the system remains in the space of admissible configurations, as  given by (\ref{eq:32a}).

An infinitesimal reaction mechanism $\mathcal L: E\times_B TB\to TE$ (as defined in Definition \ref{def:ilf}) in this specific case acts as follows. 
If $X=v_N$ and $e=(c_M, c_N)$ then 
\begin{eqnarray}\label{tang}
Y=\mathcal L(e, X) = (v_M, v_N),
\end{eqnarray}
i.e. $\mathcal L$ \lq\lq supplies\rq\rq\ the velocity $v_M\in \R^2$ such that the tangent vector (\ref{tang}) is admissible. 
If $|c_M-c_N|>2$ then $v_M$ can be arbitrary. 
However, if $|c_M-c_N|=2$ the velocity vector $v_M$ must satisfy inequality 
(\ref{eq:26}). 
We mention below a few specific infinitesimal reaction mechanisms available for the system $M$ and their effects on the mutual position of $M$ and $N$. 

\subsection{Infinitesimal reaction mechanism 1} \label{sec:53} Setting 
\begin{eqnarray}\label{rm1}
v_M=v_N 
\end{eqnarray} 
defines an infinitesimal reaction mechanism. 
In this case the system $M$ simply \lq\lq copies\rq\rq\ the velocity of $N$. The rule
(\ref{rm1}) defines an Ehresmann connection on the bundle $p:E\to B$. If the autonomous system $M$ follows the reaction mechanism (\ref{rm1}) then 
$
(c_M-c_N)'=v_M-v_N=0,
$
i.e. the vector $c_M-c_N$ remains constant, and the system $M$ essentially repeats the movements of $N$ shifted by a fixed nonzero vector, see Figure
\ref{fig:rep}.
\begin{figure}[h]
\begin{center}
\includegraphics[scale=0.25]{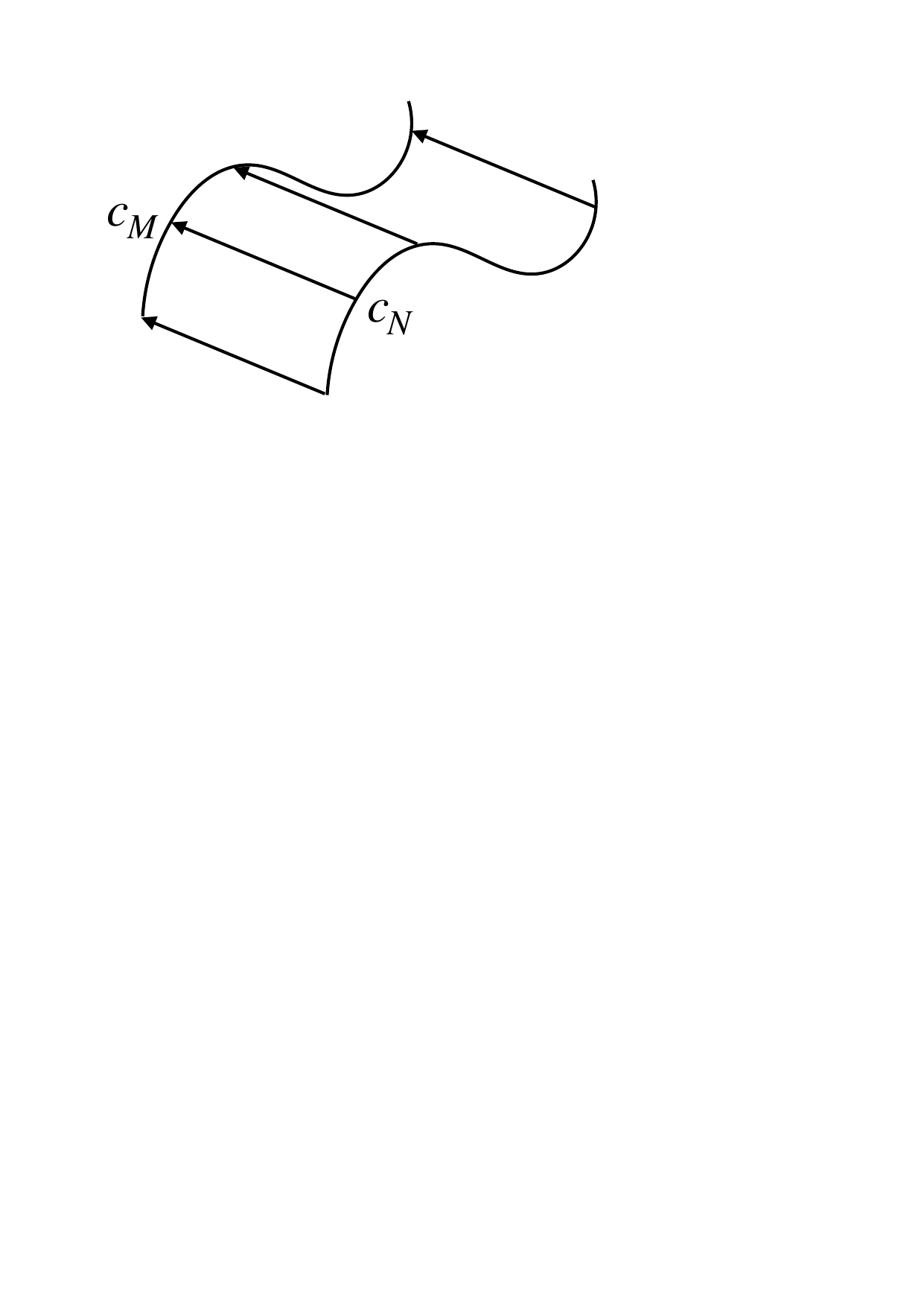}
\caption{Motion of $c_M$ and $c_N$ under the infinitesimal reaction mechanism 1.}
\label{fig:rep}
\end{center}
\end{figure}

The systems $M$ and $N$ will never collide in this case. 
If initially the balls $M$ and $N$ were touching, i.e. if $|c_M(0)-c_N(0)|=2$, then $|c_M(t)-c_N(t)|=2$ for any $t\ge 0$, i.e. the balls will remain touching all the time. This is consistent with Lemma \ref{lm:47}. 

\subsection{Infinitesimal reaction mechanism 2}\label{sec:noreaction} {\it \lq\lq No reaction\rq\rq}\ corresponds to setting 
$v_M=0$. This will define an infinitesimal reaction mechanism in areas where $|c_M-c_N|>2$, i.e. when the balls are not touching. 
In the case when $|c_M-c_N|=2$ this reaction rule may not satisfy (\ref{eq:26}). To rectify this problem we set 
\begin{eqnarray}\label{eq:37b}
v_M=\psi(|c_M-c_N|)\cdot v_N,
\end{eqnarray}
where $\psi: [0, \infty)\to [0,1]$ is an actuation function, i.e. a smooth function which is identically 1 on $[0,2]$ and has support in $[0,3]$, see Figure \ref{fig:gxa}.
\begin{figure}[h]
\begin{center}
\includegraphics[scale=0.4]{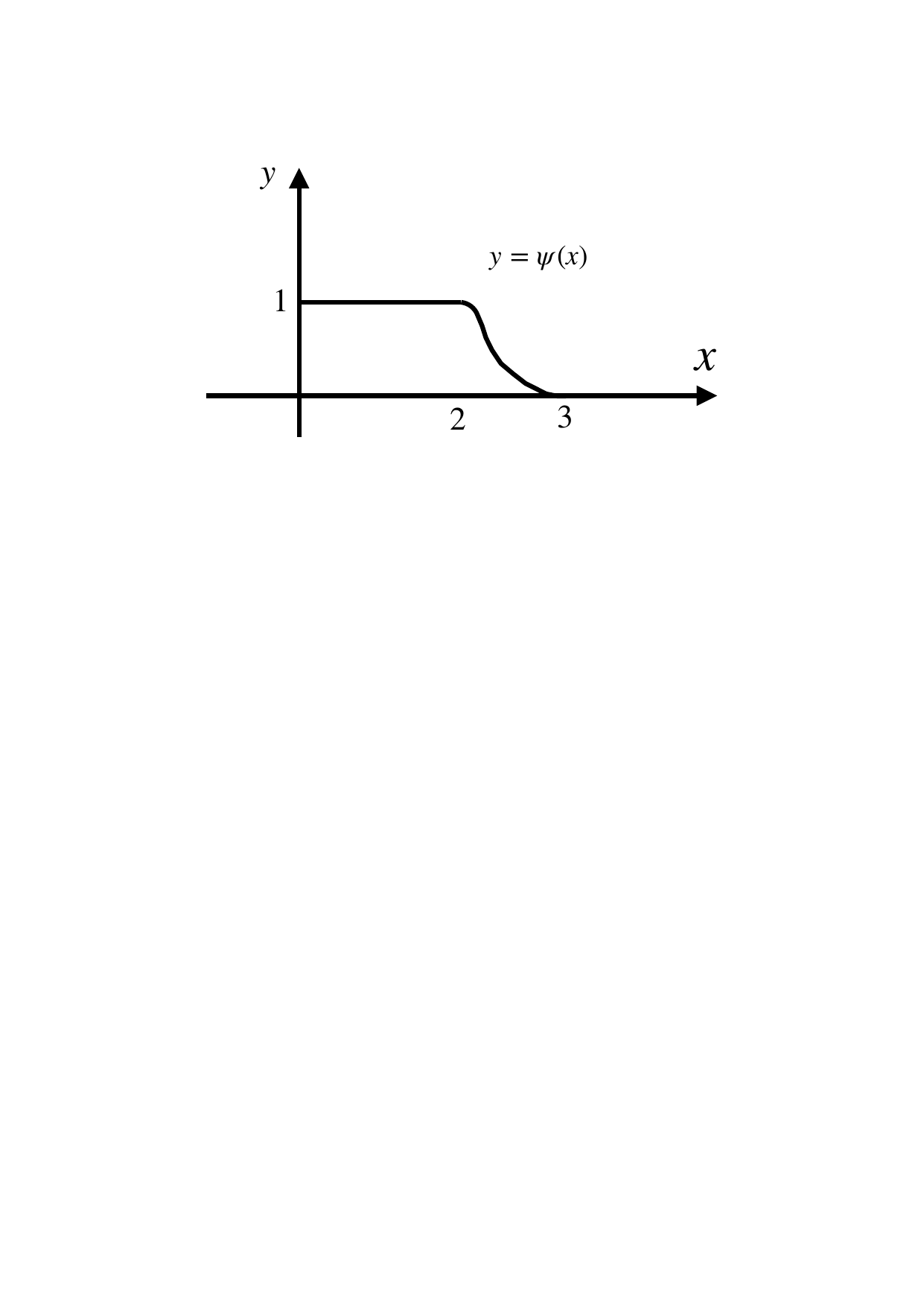}
\caption{The graph of the actuation function $\psi$.}
\label{fig:gxa}
\end{center}
\end{figure}

Of course, the upper bound $3$ can be selected differently; the actual choice of the upper bound is important and in practical situations it may depend on the technical abilities of the systems $M$ and $N$.
 
Formula (\ref{eq:37b}) defines an Ehresmann connection. 
In the area $|c_M-c_N|\ge 3$ the infinitesimal reaction mechanism (\ref{eq:37b}) acts as \lq\lq no reaction\rq\rq\ and if $|c_M-c_N|= 2$ (the balls are touching) 
we shall have $v_M=v_N$, i.e. the response will be identical to the previous example.

\subsection{Infinitesimal reaction mechanism 3} \label{sec:54}
As another example of a reaction mechanism consider the following rule:
\begin{eqnarray}\label{eq:30}
v_M=v_N+\lambda\cdot (c_M-c_N), \quad \mbox{where}\quad \lambda\in \R\quad \mbox{is a constant}.
\end{eqnarray}
The case $\lambda=0$ reduces to the example of \S \ref{sec:53}. We shall assume below that $\lambda\not=0$. 
The infinitesimal reaction mechanism (\ref{eq:30}) is nonlinear and hence is not an Ehresmann connection.
\begin{lemma}
If the autonomous system $M$ employs the infinitesimal reaction mechanism (\ref{eq:30}) then, regardless of the behaviour of $N$,
the vector $c_M-c_N$ has constant direction and 
the distance $d(t)=|c_M(t)-c_N(t)|$ between the centres of the disks $M$ and $N$ satisfies 
\begin{eqnarray}\label{eq:31}
d(t)=e^{\lambda t}\cdot d(0), \quad t\ge 0.
\end{eqnarray}
\end{lemma}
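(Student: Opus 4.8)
The plan is to reduce the statement to a one-dimensional linear ODE for the relative position vector. First I would set $w(t) = c_M(t) - c_N(t) \in \R^2$ and compute its time derivative along any motion governed by the reaction mechanism (\ref{eq:30}). By the definition of the mechanism, at every moment the velocities satisfy $v_M(t) - v_N(t) = \lambda\cdot (c_M(t) - c_N(t)) = \lambda\, w(t)$; since $w'(t) = c_M'(t) - c_N'(t) = v_M(t) - v_N(t)$, this yields the autonomous linear ODE $w'(t) = \lambda\, w(t)$ with initial condition $w(0) = c_M(0) - c_N(0)$. Crucially, this holds regardless of the trajectory $\gamma(t) = c_N(t)$ selected by $N$, because the defining relation (\ref{eq:30}) eliminates $v_N$ from the equation for $w$; this is exactly the content of the phrase \lq\lq regardless of the behaviour of $N$\rq\rq.

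The unique solution of this initial value problem is $w(t) = e^{\lambda t}\, w(0)$, valid for all $t \ge 0$ (global existence is automatic, the equation being linear). Both assertions of the lemma then follow at once: the factor $e^{\lambda t}$ is strictly positive, so $w(t)$ is a positive scalar multiple of the fixed vector $w(0)$, hence the direction of $c_M - c_N$ is constant in time; and taking Euclidean norms gives $d(t) = |w(t)| = e^{\lambda t}\,|w(0)| = e^{\lambda t}\, d(0)$, which is (\ref{eq:31}).

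The only point that needs a word of care is compatibility with the admissibility constraint $|c_M - c_N| \ge 2$ and the boundary condition (\ref{eq:26}). For $\lambda \ge 0$ the distance $d(t)$ is non-decreasing, so starting from an admissible configuration the motion stays admissible for all $t \ge 0$, and (\ref{eq:26}) holds at any instant with $d(t) = 2$ since there $\langle v_M - v_N, c_M - c_N\rangle = \lambda\,|w|^2 \ge 0$. For $\lambda < 0$ one restricts to the open region $|c_M - c_N| > 2$, where (\ref{eq:30}) is unconstrained and the same computation applies verbatim, the solution remaining above $2$ up to the (possibly infinite) time at which the disks would first touch. I do not anticipate any genuine obstacle beyond this bookkeeping: the substance of the proof is the single observation that (\ref{eq:30}) decouples into the linear equation $w' = \lambda w$ for the relative position.
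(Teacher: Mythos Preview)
Your argument is correct and rests on the same observation as the paper's proof, namely that the relation $v_M-v_N=\lambda(c_M-c_N)$ yields a linear ODE for the relative position. You solve the vector equation $w'=\lambda w$ directly, whereas the paper instead differentiates the scalar quantities $|c_M-c_N|^2$ and $\langle v, c_M-c_N\rangle$ (for a fixed $v$) separately; both routes are essentially the same.
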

\begin{proof} Differentiating the function $d^2(t)=\langle c_M-c_N, c_M-c_N\rangle$ with respect to the time $t$ and using (\ref{eq:30}) one obtains 
$$
\frac{d}{dt}(d^2)=2 \lambda d^2,
$$
which implies (\ref{eq:31}) by integration. On the other hand, for any fixed vector $v\in \R^2$ the scalar product $\langle v, c_M-c_N\rangle$ satisfies 
$\langle v, c_M-c_N\rangle'=\lambda \langle v, c_M-c_N\rangle$, i.e. $\langle v, c_M-c_N\rangle=Ce^{\lambda t}$, where $C$ is a constant. Hence, if $v$ is perpendicular to the vector 
$c_M-c_N$ at time $t=0$ then $C=0$ and therefore $v\perp (c_M-c_N)$ for all $t$, i.e. $c_M$ lies on the line passing through $c_N$ in the direction perpendicular to the vector $v$. 
\end{proof}

In the case $\lambda>0$, the automated vehicle $M$ moves away from $N$ exponentially far. In the case $\lambda<0$ the distance between $M$ and $N$ exponentially decreases and at a certain moment of time it will become $d=2$, i.e. we shall arrive at the boundary of the configuration space $E$; at this moment the vehicles $M$ and $N$ will touch each other. 

We see that the infinitesimal motion algorithms (\ref{eq:30}) 
with $\lambda>0$ {\it \lq\lq guarantee safety,\rq\rq}\  i.e. under the assumption $\lambda>0$
the vehicle $M$ will achieve safety regardless of the behaviour of $N$. 

\subsection{Infinitesimal reaction mechanism 4} \label{sec:55}
As another example, consider the infinitesimal reaction mechanism of the form
\begin{eqnarray}\label{eq:32}
v_M=v_N+\mu \cdot(c_M-c_N)^{\perp},
\end{eqnarray}
where $(c_M-c_N)^{\perp}$ is the vector obtained from $c_M-c_N$ by a 90 degrees rotation in the positive direction, and $\mu\in \R$. 
If $c$ denotes the vector $c_M-c_N$ then (\ref{eq:32}) gives $\dot c=\mu c^\perp$ and we have 
$$
\langle c, c\rangle' = 2 \cdot \langle \dot c, c\rangle =0,
$$
i.e. ~the norm $|c|=r$ is constant. We may write $c=r e^{i\phi}$ and then $\dot c=i\phi'\cdot re^{i\phi}$ and since $c^\perp=ic= ire^{i\phi}$ we obtain
$
\phi'=\mu.
$
We see that in this case the distance between the centres $c_M$ and $c_N$ remains constant and $M$ rotates around $N$ with angular velocity 
equal the value of 
the parameter $\mu$. In the case $\mu=0$ we again obtain the solution of \S \ref{sec:53}.

\section{Linear Infinitesimal Reaction Mechanisms with constant coefficients} \label{sec:6}

\subsection{} In this section we continue to consider the model with two balls on the plane as described in \S \ref{sec:5} and analyse several linear infinitesimal reaction mechanisms for this model. 
Recall that in this model we have two round balls of radius 1 on the plane, where $M$ is our ego vehicle and $N$ is \lq\lq the other vehicle\rq\rq\ whose motion is not known to $M$ in advance. 

The differential equations (\ref{eq:21a}) can easily be integrated in the case of linear infinitesimal reaction mechanism {\it with constant coefficients} which we consider in this section. 
The general infinitesimal reaction mechanism of this kind has the form 
\begin{eqnarray}\label{eq:irm1}
v_M=\alpha\cdot v_N + \beta\cdot v_N^{\perp},\quad \alpha, \beta\in \R,
\end{eqnarray}
where $\alpha$ and $\beta$ are constants.  Here $v_N$ is the velocity of the ball $N$, i.e. $v_N=\dot c_N$, and $v_N^\perp$ stands for the result of 90 degrees rotation of the vector $v_N$ in the positive direction. We shall assume that  $(\alpha, \beta)\not=(0,0)$, i.e., we exclude the possibility that both parameters $\alpha$ and $\beta$ simultaneously vanish, compare \S \ref{sec:noreaction}. 

If $c_N(t)$ and  $c_M(t)$ denote the motions of the centres of the balls $N$ and $M$ then equation (\ref{eq:irm1}) means
\begin{equation}\label{eq:irm2}
\dot c_M = \alpha\cdot \dot c_N + \beta\cdot J \dot c_N = (\alpha I+\beta J)\dot c_N =A \dot c_N,
\end{equation}
where $I=\left[\begin{array}{cc}
1 & 0\\
0 & 1
\end{array}\right]
$ is the identity map $I:\R\to \R$ and $J:\R\to \R$ is the 90 degrees rotation, i.e. $J=\left[
\begin{array}{cc}
0 & -1\\
1 & 0
\end{array}
\right]$, and $A=\alpha I+\beta J$, i.e. $A = \left[ \begin{array}{cc} \alpha& -\beta\\ \beta&\alpha\end{array}\right]$. 
From (\ref{eq:irm2}), integrating, we obtain 
\begin{equation}
c_M(t) = Ac_N(t) - c_0,
\end{equation}
where $c_0\in \R^2$ is a fixed vector.

\subsection{} There are two essentially distinct cases.
If $A=I$, i.e., $\alpha =1$ and $\beta=0$, then $$c_M(t)=c_N(t)-c_0,$$ where $c_0$ is a constant vector. This is the infinitesimal reaction mechanism of 
\S \ref{sec:53}. In this case the system $M$ 
{\it \lq\lq repeats\rq\rq}\  
the motion of $N$ with a shift by vector $c_0$. The distance between the systems $M$ and $N$ remains constant and is equal $|c_0|$, i.e. 
$$|c_M(t)-c_N(t)|=|c_0| \quad \mbox{for all}\quad t\ge 0.$$
\subsection{} Consider now the remaining general case $(\alpha, \beta)\not= (1, 0)$.  We have 
\begin{eqnarray}\label{eq:mn}
c_M(t)-c_N(t) = Bc_N(t) -c_0, \quad t\in \R,
\end{eqnarray}
where $$B=A-I= \left[ \begin{array}{cc} \alpha-1& -\beta\\ \beta&\alpha-1\end{array}\right].$$
Since $\det B = (\alpha-1)^2+\beta^2\not=0$, we see that the matrix $B$ is invertible. We may write $c_0=B\tilde c_0$ for a unique vector $\tilde c_0\in \R^2$, and then 
equation (\ref{eq:mn}) can be presented as 
\begin{equation}\label{eq:mn2}
c_M(t)-c_N(t) = B(c_N(t) -\tilde c_0).
\end{equation}
We see from (\ref{eq:mn2}) that the distance between the balls $M$ and $N$ tends to 0 when $c_N(t)$ tends to $\tilde c_0$. 
The vector $\tilde c_0\in \R^2$ can be expressed through the positions of the vehicles $M$ and $N$ at time $t=0$, i.e. via $c_M(0)$ and $c_N(0)$, as follows:
\begin{equation}\label{eq:c0}
\tilde c_0= (I+B^{-1})\cdot c_N(0) - B^{-1}\cdot c_M(0).
\end{equation}

Below the notation $|B|$ stands for the norm of the operator $B:\R^2\to \R^2$. 

\begin{lemma}\label{lm:61} Assume that the autonomous (ego) vehicle $M$ employs the linear infinitesimal reaction mechanism (\ref{eq:irm1}) with constant coefficients, and $(\alpha, \beta)\not= (1, 0)$. Consider the domain 
\begin{eqnarray}
\mathcal D=\{B^{-1}(x)+\tilde c_0; |x|\le 2\}\subset \R^2,
\end{eqnarray}
the image of the ball of radius $2$ with centre at the origin under the operator $B^{-1}$ shifted by the vector $\tilde c_0$, given by (\ref{eq:c0}).
Consider also the ball $D\subset \R^2$ with centre $\tilde c_0$ and radius $2\cdot |B|^{-1}$, as well as the ball $D'\subset \R^2$ with centre $\tilde c_0$ and radius 
$2\cdot |B^{-1}|$. 
We claim:
\begin{enumerate}
\item[{(a)}] The vehicles $M$ and $N$ are in collision at time $t>0$ if and only if the centre $c_N(t)$ of the vehicle $N$ lies in the 
domain $\mathcal D$, i.e. if $c_N(t)\in \mathcal D$;  

\item[{(b)}] One has $D\subset \mathcal D\subset D'$ and hence $c_N(t)\in D$ implies a collision  at time $t$ and $c_N\notin D'$ implies the absence of a collision at time $t$.
\end{enumerate}
\end{lemma}
\begin{figure}[htbp]
\begin{center}
\includegraphics[scale=0.5]{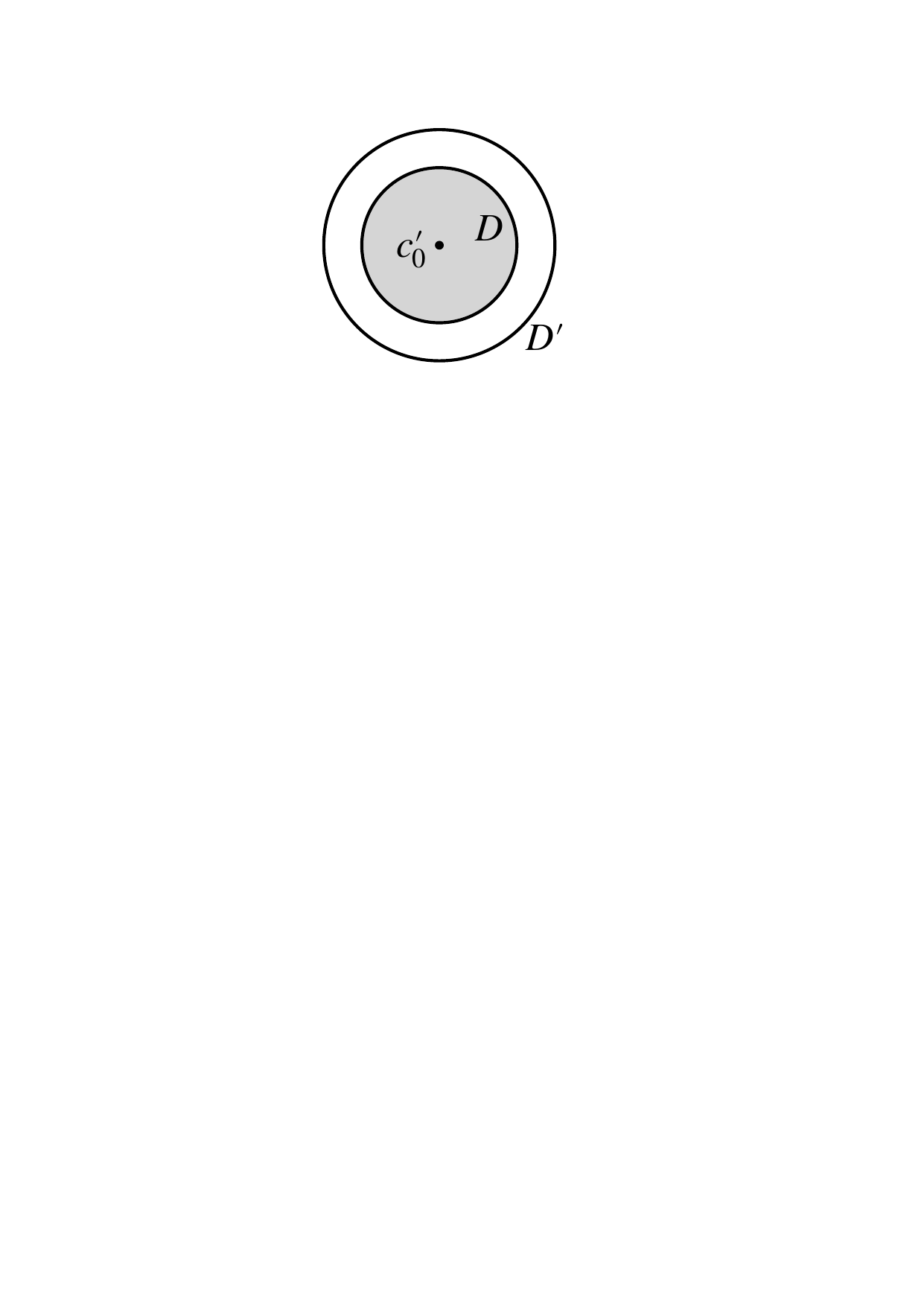}
\caption{The balls $D\subset D'$ with the centre $\tilde c_0$.}
\label{fig:1a}
\end{center}
\end{figure}
\begin{proof} Equation (\ref{eq:mn2}) gives 
$
c_N(t) = B^{-1}(c_M(t)-c_N(t))+\tilde c_0
$
and (a) follows since a collision at time $t$ happens iff $|c_M(t)-c_N(t)|\le 2$. 

To prove (b) we note that $y\in \mathcal D$ implies 
$y-\tilde c_0=B^{-1}(x)$ where $|x|\le 2$. Then $|y-\tilde c_0|=|B^{-1}x|\le |B^{-1}|\cdot |x|\le 2\cdot |B^{-1}|$. Thus, we see that $\mathcal D\subset D'$. 
If $y\in D$, i.e. $|y-\tilde c_0|\le 2\cdot |B|^{-1}$, then $y-\tilde c_0=B^{-1}(B(y-\tilde c_0))$ and $|B(y-\tilde c_0)|\le |B|\cdot |y-\tilde c_0|\le 2$, i.e. $y\in \mathcal D$. 
\end{proof}
%
%

%
\begin{corollary}\label{cor:62}
If the autonomous vehicle $M$ employs a linear infinitesimal reaction mechanism with constant coefficients (\ref{eq:irm1}) and $(\alpha, \beta)\not= (1, 0)$ 
then \lq\lq the other vehicle\rq\rq\ $N$ is always capable of {\it \lq\lq forcing\rq\rq}\ a collision. 
\end{corollary}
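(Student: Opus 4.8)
The plan is to deduce everything from the explicit description \eqref{eq:mn2} of the motion together with Lemma \ref{lm:61}. Since $(\alpha,\beta)\neq(1,0)$, the matrix $B=A-I$ is invertible, so the analysis leading to \eqref{eq:mn2} applies verbatim, and the point $\tilde c_0\in\R^2$ given by \eqref{eq:c0} is a \emph{fixed} vector, determined once and for all by the initial positions $c_M(0)$ and $c_N(0)$. As long as $M$ executes the mechanism \eqref{eq:irm1}, the centres satisfy $c_M(t)-c_N(t)=B\bigl(c_N(t)-\tilde c_0\bigr)$ for every $t\ge 0$ for which the solution is defined; and the underlying equation $\dot c_M=A\dot c_N$ is linear, hence defined for all $t\ge 0$.

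Next I would let the \lq\lq other\rq\rq\ vehicle $N$ exploit the fact that its trajectory $c_N\colon[0,\infty)\to\R^2$ is entirely of its own choosing, while $c_M$ is slaved to it. First, admissibility at $t=0$ gives $2\le|c_M(0)-c_N(0)|=|B(c_N(0)-\tilde c_0)|\le|B|\cdot|c_N(0)-\tilde c_0|$, so $c_N(0)\neq\tilde c_0$. Let $N$ travel along the straight segment from $c_N(0)$ towards $\tilde c_0$; choosing its speed appropriately, it reaches a point $c_N(t_1)$ with $|c_N(t_1)-\tilde c_0|<2\,|B|^{-1}$ at some finite time $t_1>0$ (such points form a ball of strictly positive radius about $\tilde c_0$). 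By Lemma \ref{lm:61}(a) the configuration of $M$ and $N$ is then in collision at the moment $t_1$: indeed $|c_M(t_1)-c_N(t_1)|\le|B|\cdot|c_N(t_1)-\tilde c_0|<2$. Even more starkly, $N$ may steer $c_N$ all the way to $\tilde c_0$, whereupon $c_M(t_1)-c_N(t_1)=B\cdot 0=0$ and the two safety disks literally coincide. This establishes the corollary.

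The one point that needs a word of care — and it is the only one — is that the identity $c_M(t)-c_N(t)=B(c_N(t)-\tilde c_0)$ was derived under the standing assumption that $M$ keeps following the mechanism \eqref{eq:irm1}; but this is precisely the hypothesis of the corollary, and since $M$'s response is completely passive (it can only reproduce $A\dot c_N$), no freedom is left for $M$ to avoid the outcome. Thus there is no genuine obstacle: the statement is an immediate consequence of Lemma \ref{lm:61}(a) together with the observation that $\tilde c_0$ is a point in the base which $N$ is free to approach arbitrarily closely.
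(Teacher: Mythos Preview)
Your argument is correct and follows exactly the same idea as the paper's proof, which consists of the single sentence ``Indeed, the vehicle $N$ can simply move into the ball $D$.'' You have merely spelled out the details: verifying that $\tilde c_0$ is well-defined and distinct from $c_N(0)$, and then invoking Lemma~\ref{lm:61}(a) once $N$ has driven into~$D$.
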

\begin{proof}
Indeed, the vehicle $N$ can simply move into the ball $D\subset\mathcal D$ as we impose no restrictions on the motion of $N$. 
\end{proof}
\subsection{} To illustrate the above discussion we consider now a specific example of a linear infinitesimal reaction mechanism (\ref{eq:irm1}) with $\beta=0$ and $\alpha\not=1$, $\alpha>0$.  Thus we have $v_M=\alpha\cdot v_N$. Then the matrix $B$ has the form
$$B=\left[ 
\begin{array}{cc}
\alpha-1 & 0\\
0&\alpha-1
\end{array}
\right],\quad  B^{-1}=\left[ 
\begin{array}{cc}
(\alpha-1)^{-1} & 0\\
0&(\alpha-1)^{-1}
\end{array}
\right]$$ and, according to (\ref{eq:c0}),
\begin{equation}\label{eq:49a}
\tilde c_0=\frac{\alpha}{\alpha-1}\cdot c_N(0) - \frac{1}{\alpha-1}\cdot c_M(0).
\end{equation}
We see that $\tilde c_0$ lies on the straight line connecting $c_M(0)$ and $c_N(0)$. The matrix $B$ is diagonal and hence 
$$|B|^{-1}=|B^{-1}|=(\alpha-1)^{-1}.$$
The discs $D$ and $D'$ coincide with each other and with the domain $\mathcal D$ in this special case. 

If $\alpha >1$ then $c_N(0)$ lies between $c_M(0)$ and $\tilde c_0$, see Figure \ref{fig:2a} (left). 
\begin{figure}[h]
\begin{center}
\includegraphics[scale=0.5]{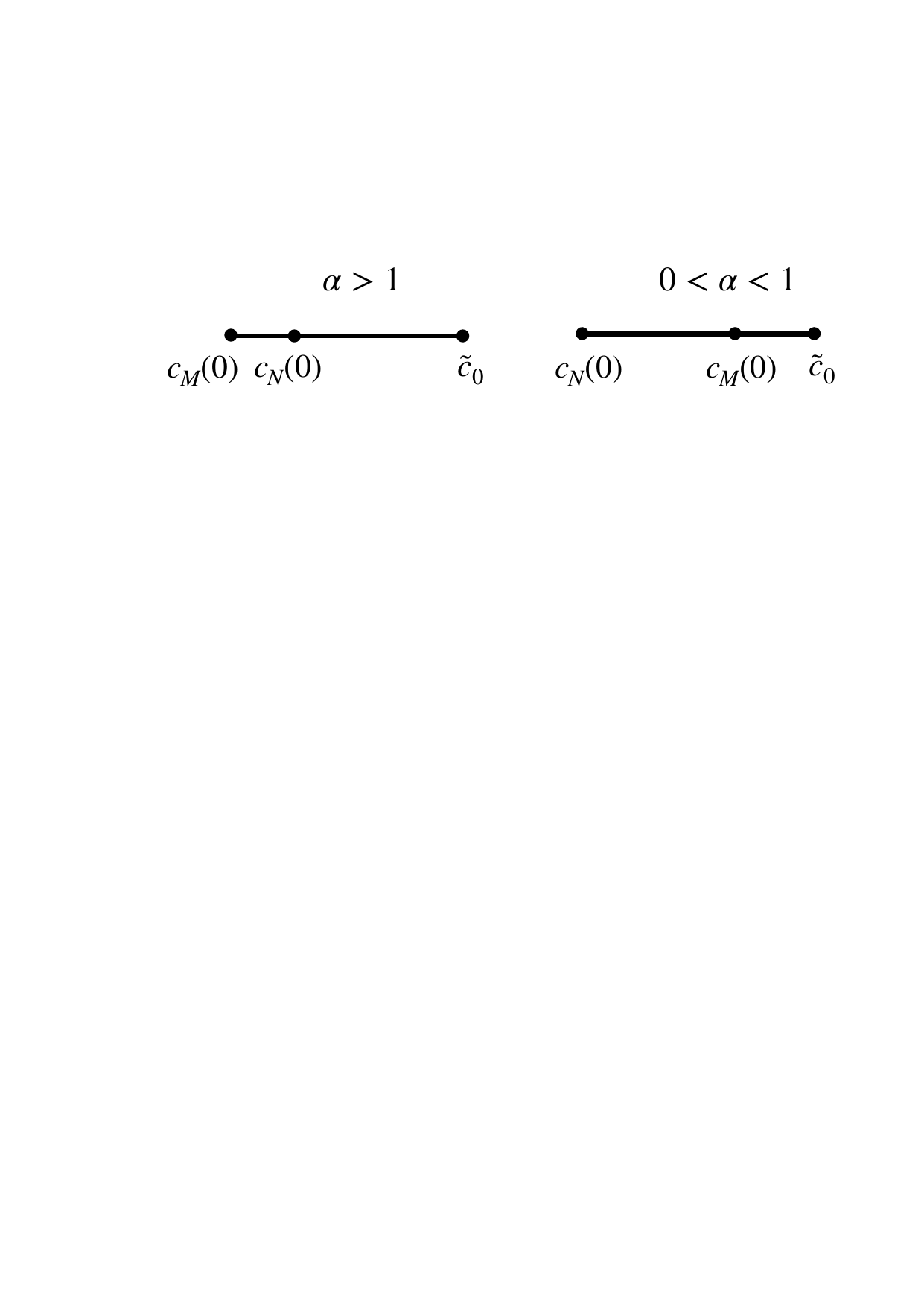}
\caption{The cases $\alpha>1$ (left) and $0<\alpha<1$ (right).}
\label{fig:2a}
\end{center}
\end{figure}
If $N$ moves towards $\tilde c_0$ with velocity $v_N$ then, according to the motion algorithm (\ref{eq:irm1}), $M$ moves towards $\tilde c_0$ with velocity 
$v_M=\alpha\cdot v_N.$ From (\ref{eq:49a}) we get
\begin{eqnarray}
|\tilde c_0-c_M(0)| =\alpha\cdot |\tilde c_0-c_N(0)|,
\end{eqnarray}
and hence, if $M$ employs the motion algorithm $v_M=\alpha\cdot v_N$, the centres $c_M(t)$ and $c_N(t)$ arrive at the point $\tilde c_0$ at the same time. 
Thus, for 
$\alpha >1$ 
the distance between $c_M$ and $c_N$ becomes smaller than $2$ before $N$ arrives to $\tilde c_0$. 

If $0<\alpha <1$ (see Figure \ref{fig:2a} right) then the similar analysis shows that the distance between the centres of $N$ and $M$ becomes smaller than $2$ before $M$ arrives to $\tilde c_0$.

The important difference between the cases $\alpha>1$ and $0<\alpha <1$ is that in the case $\alpha>1$ one can view the ego vehicle $M$ as being \lq\lq responsible\rq\rq\ for the collision,  
while in the case $0<\alpha <1$ \lq\lq the responsibility\rq\rq\ is on $N$ since it hits $M$ from behind. 

{\it Conclusions: } 

(1) As stated in Corollary \ref{cor:62}, the only linear reaction mechanism with constant coefficients (\ref{eq:irm1}) which guarantees safety (collision avoidance) 
is the case $(\alpha, \beta)=(1, 0)$, which is the reaction mechanism described in \S \ref{sec:53}. 

(2) This discussion illustrates the necessity to use {\it the reaction forms} and {\it the actuation functions} as described in Corollary \ref{cor:45} and in Example \ref{ex:46}. 
\section{Appendix: Proof of Lemma \ref{lm:6}}

Lemma \ref{lm:6} is a special  case of a more general statement which we state below as Proposition \ref{prop:app}. We start with a few general remarks and introduce the notations. 

Let $p: E\to B$ be a Hurewicz fibration and let $(K, L)$ be a pair consisting of a finite simplicial complex $K$ and its subcomplex $L$. 
The term \lq\lq a map\rq\rq\  always means  \lq\lq a continuous map\rq\rq. The notation $E^K$ stands for the space of all maps $K\to E$ equipped with the compact-open topology.  The notations $E^L$, $B^L$ and $B^K$ should be understood similarly. 
One has four associated maps
\begin{eqnarray}\label{diag:1}
\xymatrix{
E^K\ar[r]^{q_E}\ar[d]_{p^K}&E^L\ar[d]^{p^L}\\
B^K\ar[r]_{q_B}&B^L,
}
\end{eqnarray}
where the vertical maps $p^K$ and $p^L$ are compositions with the map $p$ and the horizontal maps $q_E$ and $q_B$ are restrictions 
on $L$ of the maps defined on $K$. The maps $q_E$ and $q_B$ are Hurewicz fibrations by Theorem 2 from \cite{Sp}, chapter 2, \S 8. The maps $p^K$ and $p^L$ are also Hurewicz fibrations, see \cite{Sp}, Exercise E.3 from chapter 2. 

Consider the pullback diagram 
\begin{eqnarray}\label{diag:2}
\xymatrix{
B^K\times_{B^L}E^L \ar[r]^{\hskip 1cm \pi_1}\ar[d]_{\pi_2} &E^L\ar[d]^{p^L}\\
B^K\ar[r]_{q_B}&B^L.
}
\end{eqnarray}
Recall that the space $B^K\times_{B^L}E^L$ is defined as the subset of the product $B^K\times E^L$ consisting of all pairs $\alpha: K\to B$, $\beta:L\to E$ such that $\alpha|_L =p\circ \beta$. Using diagram (\ref{diag:1}) we obtain a map 
\begin{eqnarray}\label{eq:ek}
\Pi: E^K\to B^K\times_{B^L}E^L 
\end{eqnarray}
where $\pi_1\circ \Pi=q^E$ and $\pi_2\circ \Pi=p^K$. 

\begin{proposition}\label{prop:app}
The  map (\ref{eq:ek}) 
is a Hurewicz fibration. 
\end{proposition}
\begin{proof} We show below that the map (\ref{eq:ek}) satisfies the homotopy lifting property. 

Let $X$ be a topological space and let $f: X\times I\to B^K\times_{B^L}E^L$ and $g: X\times\{0\}\to E^K$ be maps such that 
$$\Pi\circ g=f\circ i$$ where $i: X\times \{0\}\to X\times I$ is the inclusion. In other words, the following diagram commutes
$$
\xymatrix{
X\times\{0\}\ar[r]^g\ar[d]_i&E^K\ar[d]^{\Pi}\\
X\times I\ar[r]_{f\hskip 0.5 cm}\ar@{-->}[ru]^h&B^K\times_{B^L}E^L.
}
$$
We want to show the existence of a map $h: X\times I\to E^K$ such that 
\begin{eqnarray}\label{eq:want}
f=\Pi\circ h \quad \mbox{and}\quad  g=h\circ i. 
\end{eqnarray}

Using the exponential correspondence (see \cite{Sp}, Introduction, \S 2.8), we see that the compositions  $\pi_1\circ f$ and $\pi_2\circ f$ 
correspond to maps $f': X\times I\times L\to E$ and $f'': X\times I\times K\to B$ and  such that 
the diagram 
$$
\xymatrix{
X\times I\times L\ar[r]^{\hskip 0.5 cm f'}\ar[d]&E\ar[d]^p\\
X\times I\times K\ar[r]_{\hskip 0.5 cm f''}&B
}$$
commutes. Besides, the map $g$ corresponds to a map $g': X\times\{0\}\times K\to E$ which appears in the following commutative diagrams
$$
\xymatrix{
X\times \{0\}\times K\ar[r]^{\hskip 0.5 cm g'}\ar[d]_{i \times 1_K} &E\ar[d]^p\\
X\times I\times K\ar[r]_{\hskip 0.5 cm f''}&B,
}
\quad\quad \quad
\xymatrix{
X\times\{0\}\times L\ar[d]_{i\times 1_L}\ar[r]^{\subset}&X\times\{0\}\times K\ar[d]^{g'}\\
X\times I\times L\ar[r]_{f'}& E.
}
$$
Here $i: X\times \{0\}\to X\times I$ denotes the inclusion. The diagram on the right shows that the maps $f'$ and $g'$ coincide on the intersection of their domains. 
Combining these two diagrams we obtain a commutative digram 
$$
\xymatrix{
{X\times I\times L\cup X\times\{0\}\times K}\ar[r]^{\hskip 2 cm f'\cup g'}\ar[d]_j&E\ar[d]^p\\
X\times I\times K\ar[r]_{\hskip 0.5 cm f''}&B.
}$$
Here $j$ is the natural inclusion and $f' \cup g'$ is the map determined by $f'$ and $g'$ as explained above.

By Corollary 3.2.4 from \cite{Sp} the space $X\times I\times L\cup X\times\{0\}\times K$ is a strong deformation retract of $X\times I\times K$. 
Using this fact and our assumption that $p:E\to B$ is a Hurewicz fibration we may apply the homotopy lifting property to the last diagram to obtain the existence of 
a map $h': X\times I\times K\to E$ with $$p\circ h'=f''\quad \mbox{and}\quad  h'\circ j=f' \cup g'.$$ 
By the exponential correspondence, $h'$ corresponds to a map 
$$h: X\times I\to E^K$$ satisfying 
\begin{eqnarray}
q_E\circ h=\pi_1\circ f,\quad p^K\circ h=\pi_2\circ f\quad \mbox{and}\quad h\circ i=g.
\end{eqnarray}
The first two equalities give $\Pi\circ h=f$; we see that both equalities (\ref{eq:want}) are satisfied. 
This completes the proof. 
\end{proof}
\begin{proposition}\label{prop:23}
The fibre of the fibration (\ref{eq:ek}) has homotopy type of the space of base-point preserving continuous maps $K/L\to F$, where $F$ is the fibre of 
$p:E\to B$. 
\end{proposition}
\begin{proof} Consider the pair $(\alpha, \beta)\in B^K\times_{B^L}E^L$ where $\alpha: K\to B$ and $\beta: L\to E$ are constant maps. 
The preimage $\Pi^{-1}(\alpha, \beta)$ is the space of all maps $f: K\to E$ with values in a single fibre $F\subset E$ of the fibration 
$p:E\to B$ such that 
$f(L)=\{e_0\}$ where $e_0\in F$ is the image of $\beta$. This proves our statement. 
Recall that all fibres of a Hurewicz fibration have the same homotopy type (see \cite{Sp}, Corollary 3.8.3).  
\end{proof}
\begin
{proof}[Proof of Lemma \ref{lm:6}] Lemma \ref{lm:6} is a special case of Proposition \ref{prop:app} combined with Proposition \ref{prop:23} 
with $K=I=[0,1]$ and $L=\{0, 1\}$. 
\end{proof}

\end{document}